\documentclass[11pt,a4paper]{amsart}
\usepackage{amssymb,xspace}
\usepackage{amstext}
\theoremstyle{plain}
\usepackage{amsbsy,amssymb,amsfonts,latexsym}
\usepackage[utf8]{inputenc}
\usepackage{xcolor}

\marginparwidth=10 true mm
\oddsidemargin=0 true mm
\evensidemargin=0 true mm
\marginparsep=5 true mm
\topmargin=0 true mm
\headheight=8 true mm
\headsep=4 true mm
\topskip=0 true mm
\footskip=15 true mm

\setlength{\textwidth}{150 true mm}
\setlength{\textheight}{220 true mm}
\setlength{\hoffset}{8 true mm}
\setlength{\voffset}{2 true mm}

\parindent=0 true mm

\usepackage{enumerate}
\usepackage{graphics}

\date{\today}
\title{Hypercyclic algebras}
\author{Fr\'ed\'eric Bayart}
\address{Université Clermont Auvergne, CNRS, LMBP, F-63000 Clermont–Ferrand, France.}
\email{Frederic.Bayart@uca.fr}
\thanks{The author was partially supported by the grant ANR-17-CE40-0021 of the French National Research Agency ANR (project Front)}

\subjclass{}

\keywords{}

\newcommand{\veps}{\varepsilon}

\def\RR{\mathbb R}

\def\NN{\mathbb N}

\def\TT{\mathbb T}
\def\DD{\mathbb D}

\def\CC{\mathbb C}

\def\card{\textrm{card}}

\def\bl{\mathbf k}
\def\bj{\mathbf j}
\def\bl{\mathbf l}
\def\NNinf{\NN^{(\infty)}}
\def\NNinfe{\NN^{(\infty)}\backslash\{(0,\dots)\}}

\newtheorem{theorem}{Theorem}[section]

\newtheorem{lemma}[theorem]{Lemma}

\newtheorem{proposition}[theorem]{Proposition}

\newtheorem{corollary}[theorem]{Corollary}

{\theoremstyle{definition}}
{\theoremstyle{definition}}

{\theoremstyle{definition}\newtheorem{example}[theorem]{Example}}

{\theoremstyle{definition}}

{\theoremstyle{definition}}

{\theoremstyle{definition}\newtheorem{remark}[theorem]{Remark}}





\begin{document}

\begin{abstract}
We prove the existence of algebras of hypercyclic vectors in three cases: convolution operators, composition operators, and backward shift operators.
\end{abstract}

\maketitle

\section{Introduction}
When we work with a linear dynamical system $(X,T)$, it is natural to study how its linear properties  influence its dynamical properties. Here, $X$ denotes a topological vector space and $T$
is a continuous linear operator on $X$. We are mainly interested in hypercyclic operators: there exists $x\in X$, called a hypercyclic vector for $T$, such that 
$\{T^n x;\ n\geq 0\}$ is dense in $X$ (when $X$ is a second-countable Baire space, this amounts to saying that $T$ is topologically transitive). 
It is well known that the linear properties of $(X,T)$ reflects on $HC(T)$, the set of hypercyclic vectors for $T$: $HC(T)\cup\{0\}$ always contains a dense subspace (\cite{Bou93}) and there are nice criteria to determine if it contains a closed infinite-dimensional subspace (see \cite{GoLM00}, \cite{Men14}).

In this paper we assume that $X$ is also a topological algebra and we ask whether $HC(T)\cup\{0\}$ contains a nontrivial algebra; we will call this a hypercyclic algebra for $T$. 
We explore three relevant situations.

\subsection{Convolution operators}

Following the pioneering work of Birkhoff and MacLane, Godefroy and Shapiro have shown in \cite{GoSh91}
that a nonconstant operator which commutes with all translations $\tau_a$ is hypercyclic on $H(\CC)$.
Such an operator may be written $\phi(D)$, where $\phi$ is an entire function of exponential type and $D$ is the derivation operator.
Bayart and Matheron in \cite{BM09} and independently Shkarin in \cite{Shk10b} have shown that $D$ admits a hypercyclic algebra.
The argument of \cite{BM09}, which is based on the Baire category theorem and the fact that $\bigcup_n \ker(D^n)$ is dense in $H(\CC)$,
was refined by B\`es, Conejero and Papathanasiou in \cite{BCP17} to prove that $P(D)$ supports a hypercyclic algebra for all nonzero polynomials $P$ with $P(0)=0$
(see also the recent paper \cite{FalGre18} for the existence of hypercyclic algebras for weighted backward shifts in various Fr\'echet algebras).

More recently, in the nice paper \cite{BCP18}, the same authors provide further examples of entire functions $\phi$ such that $\phi(D)$ admits such an algebra. For instance, this holds true for $\phi(z)=\cos(z)$ which does not satisfy $\phi(0)=0$ and which is not a polynomial.

In stark contrast with this, it was observed in \cite{ACPS07} that the orbit of $f^2$ under $\lambda\tau_a$ (which corresponds to the case $\phi(z)=\lambda e^{az}$)
can only contain functions for which the multiplicities of their zeros is even. 

Our first main theorem characterizes the existence of a hypercyclic algebra for $\phi(D)$ when $|\phi(0)|<1$, or when $|\phi(0)|=1$ and $\phi$ has moderate growth.

\begin{theorem}\label{thm:mainconvolution}
Let $\phi$ be a nonconstant entire function with exponential type. 
\begin{enumerate}
\item Assume that $|\phi(0)|<1$. Then the following assertions are equivalent:
\begin{enumerate}[(i)]
\item $\phi(D)$ supports a hypercyclic algebra.
\item $\phi$ is not a multiple of an exponential function.
\end{enumerate}
\item Assume that $|\phi(0)|=1$ and $\phi$ has subexponential growth. If either $\phi'(0)\neq 0$ or $\phi$ has order less than 1/2, then $\phi(D)$ supports a hypercyclic algebra.
\end{enumerate}
\end{theorem}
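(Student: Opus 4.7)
My plan rests on the standard transitivity-type criterion for hypercyclic algebras: $\phi(D)$ supports a hypercyclic algebra provided that for every nonzero polynomial $P$ with $P(0)=0$ and every pair of nonempty open sets $U,V\subset H(\CC)$, there exist $f\in U$ and $n\in\NN$ with $\phi(D)^{n}P(f)\in V$. A Baire-category argument, applied to a countable enumeration of such triples, then delivers a dense $G_{\delta}$ of $f$'s whose generated algebra lies in $HC(\phi(D))\cup\{0\}$. The implication $(i)\Rightarrow(ii)$ in part (1) is exactly the observation mentioned in the introduction: if $\phi(z)=\lambda e^{az}$, then $\phi(D)^{n}(f^{2})=\lambda^{n}f(\cdot+na)^{2}$ has only zeros of even multiplicity, so the orbit of $f^{2}$ avoids the open dense set of entire functions possessing a simple zero and cannot be dense.

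\textbf{Positive direction of part (1).} Fix $P(z)=\sum_{k=1}^{d}a_{k}z^{k}$ with $a_{d}\neq 0$. By the Godefroy--Shapiro density theorem, it suffices to approximate a starting function $g$ and a target $h$ that are finite linear combinations of exponentials. The assumption $|\phi(0)|<1$ is then crucial: $\phi(D)$ has spectral radius $|\phi(0)|<1$ on every finite-dimensional subspace $\ker D^{m+1}$, so $\phi(D)^{n}Q\to 0$ for every polynomial $Q$. Writing $h\approx\sum_{s}\beta_{s}e_{\alpha_{s}}$ with the $\alpha_{s}$ chosen in the nonempty open set $\{|\phi|>1\}$, I try the Ansatz $f=g+\sum_{s}c_{s}e_{\alpha_{s}/d}$ with $c_{s}^{d}=\beta_{s}/(a_{d}\phi(\alpha_{s})^{n})$, so that $|c_{s}|\sim|\phi(\alpha_{s})|^{-n/d}\to 0$. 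Expanding $\phi(D)^{n}P(f)$ multinomially, the diagonal terms $a_{d}c_{s}^{d}\phi(\alpha_{s})^{n}e_{\alpha_{s}}$ reproduce $h$, while every cross term indexed by a multi-index $(m_{s})$ with $1\leq\sum m_{s}\leq d$ is negligible if and only if
\[\log\left|\phi\left(\tfrac{1}{d}\sum_{s}m_{s}\alpha_{s}\right)\right|<\tfrac{1}{d}\sum_{s}m_{s}\log|\phi(\alpha_{s})|.\]
The heart of the argument is the lemma that, unless $\phi$ is a scalar multiple of an exponential, this system of strict inequalities admits a nonempty open set of solutions $(\alpha_{s})$. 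Equality throughout would force $\log|\phi|$ to be affine on $\CC$, and a short maximum-principle argument applied to $\phi/e^{az}$ then forces $\phi(z)=ce^{az}$.

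\textbf{Part (2) and the main obstacle.} When $|\phi(0)|=1$ the previous decay argument breaks down: on $\ker D^{m+1}$, $\phi(D)$ equals $\phi(0)I$ plus a nilpotent, so its iterates grow polynomially in $n$ rather than decaying. To restore decay I abandon the polynomial reduction for $g$ and approximate $g$ instead by a sum $\sum_{l}b_{l}e_{\nu_{l}}$ of exponentials with the $\nu_{l}$ chosen in a small region near $0$ where $|\phi|<1$ strictly; this forces every term in the expansion of $\phi(D)^{n}P(g)$, and every cross product with the $e_{\alpha_{s}/d}$'s, to tend to $0$. When $\phi'(0)\neq 0$, the Taylor expansion $\phi(z)=\phi(0)+\phi'(0)z+O(z^{2})$ supplies a half-plane at $0$ on which $|\phi|<1$, and any $\nu_{l}$'s accumulating inside it will do. When $\phi'(0)=0$ but $\phi$ has order less than $1/2$, classical minimum-modulus estimates for low-order entire functions provide suitable $\nu_{l}$'s near $0$ together with faraway $\alpha_{s}$'s on which $|\phi|$ is large; subexponential growth still rules out $\phi$ being an exponential, so the concavity lemma of part (1) applies. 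The main obstacle, and the most delicate part of the proof, is the simultaneous balancing of all these approximations: one must ensure that the $O(1)$ target contribution survives while every cross term and the polynomial tail decays, which is precisely what the two supplementary hypotheses of part (2) are designed to make possible.
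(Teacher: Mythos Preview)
Your outline for Part~(1) is essentially the paper's approach dressed in a different criterion, but there is a logical gap in the ``concavity lemma''. You need $\alpha_1,\dots,\alpha_q$ (with an accumulation point) such that \emph{every} admissible multi-index $(m_s)$ with $(m_s)\neq(d,0,\dots)$ yields a strict inequality. The negation of this is ``for every choice of $(\alpha_s)$ there exists \emph{some} $(m_s)$ where the inequality fails'', which is far weaker than ``$\log|\phi|$ is affine''. The paper resolves this in two separate steps: a local lemma (Lemma~\ref{lem:convexity}) giving a segment $[w_1,w_2]$ on which $t\mapsto\log|\phi(tw_1+(1-t)w_2)|$ is strictly convex (this kills the off-diagonal terms with $\sum m_s=d$), and a positioning argument (Corollary~\ref{cor:smalleigengeneral}, via $|\phi(0)|<1$) giving $w_0$ with $|\phi(w_0)|>1$ but $|\phi(rw_0)|<1$ for all $r\le \rho<1$ (this kills every term with $\sum m_s<d$, since then $\frac1d\sum m_s\alpha_s$ lies in the contraction region). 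Your single-sentence contradiction argument does not deliver either of these.

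For Part~(2) the gap is more serious, and your route diverges from the paper's. You keep the Ansatz $f=\sum b_l e_{\nu_l}+\sum c_s e_{\alpha_s/d}$ and claim that choosing the $\nu_l$ in a region where $|\phi|<1$ near $0$ handles all cross terms. But a cross term with $M=\sum m_s\ge 1$ has exponent $\mu\approx \tfrac{M}{d}\bar\alpha$ and coefficient of size $\prod|\phi(\alpha_s)|^{-nm_s/d}$, so what you actually need is $|\phi(\tfrac{M}{d}\bar\alpha)|<|\phi(\bar\alpha)|^{M/d}$ for every $1\le M<d$. When $|\phi(0)|=1$ this is \emph{not} automatic and you give no argument for it; saying that the extra hypotheses ``are designed to make this possible'' is not a proof. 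The paper takes a genuinely different path here: it drops the division by $m$ and sets $u=\sum a_l E(\gamma_l)+\sum c_j E(\lambda_j)$, so that the leading term in $u^m$ is now the $d=1$ piece $E(\lambda_j+(m-1)\gamma_1)$ rather than the $d=m$ diagonal. The key technical input is then Lemma~\ref{lem:order}, which uses subexponential growth to produce $w_0$ with $|\phi(w_0)|>|\phi(dw_0)|^{1/d}$ for $d=2,\dots,m$; this is the \emph{opposite} inequality to the one your Ansatz requires, and it is exactly where subexponential growth does real work (not merely excluding exponentials). Your sketch never isolates this phenomenon.

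A smaller issue: the criterion you invoke (transitivity for \emph{every} $P$ with $P(0)=0$) involves an uncountable family, so a naive Baire argument fails; you either need to reduce to monomials as in Lemma~\ref{lem:criterion} (which also forces you to control all $u^n$, $n\le m$, simultaneously) or to run a compactness argument over normalized polynomials.
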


In view of the previous result, it is tempting to conjecture that the assumption $|\phi(0)|\leq 1$ is a necessary condition for $\phi(D)$ to admit a hypercyclic algebra.
Surprizingly, this is not the case.

\begin{theorem}\label{thm:exampleconvolution}
Let $\phi(z)=2\exp(-z)+\sin(z)$. Then $\phi(D)$ supports a hypercyclic algebra.
\end{theorem}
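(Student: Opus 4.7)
The plan is to verify a transitivity-type condition in order to apply the Baire-category framework of \cite{BM09,BCP17,BCP18}. Specifically, it is enough to show that for every integer $m\geq 1$, every compact $K\subset\CC$, every $\veps>0$ and every pair $u,v\in H(\CC)$, there exist $f\in H(\CC)$ and an integer $n\geq 0$ with $\|f-u\|_K<\veps$ and $\|\phi(D)^n(f^m)-v\|_K<\veps$, together with the standard control on the lower powers $\phi(D)^n(f^j)$ for $j<m$ needed to handle an arbitrary polynomial $P$ with $P(0)=0$. Once this is established, the Baire argument produces a residual set of $f\in H(\CC)$ for which $\{P(f):P\text{ polynomial},\,P(0)=0,\,P\neq 0\}\subset HC(\phi(D))$.

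The main obstacle is that $|\phi(0)|=2>1$: the operator $\phi(D)^n$ expands constants by $2^n$, so $\phi(D)^n(u^m)$ typically grows like $2^n$ on $K$ and the small-perturbation strategy of Theorem~\ref{thm:mainconvolution}(1) is no longer available --- the perturbation of $u$ must generate an output of order $2^n$ on $K$ while being of size $\veps$ in $H(\CC)$. The resource that compensates for this is the rich zero set of $\phi$. In addition to the (asymptotically real) zeros $\lambda_k=k\pi+O(e^{-k\pi})$, the equation $2e^{-z}+\sin(z)=0$ has two complex sequences of zeros accumulating along the curves $\mathrm{Re}(z)+|\mathrm{Im}(z)|\sim\log 4$. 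For a complex zero $\lambda$ with $|\mathrm{Im}(\lambda)|\to\infty$, the factor $2e^{-z}$ dominates at $\lambda/m$, giving $|\phi(\lambda/m)|\sim 2e^{|\mathrm{Im}(\lambda)|/m}$, which is arbitrarily large, and $\phi(\lambda/m)^n$ thus provides the multiplicative factors needed to balance the $2^n$ growth.

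Accordingly, I would look for $f$ of the form
$$
f=u+\sum_{i=1}^{N}\delta_i\,e^{\lambda_i z/m},
$$
with the $\lambda_i$ among complex zeros of $\phi$ chosen from an appropriate half-plane (so that $e^{\lambda_i z/m}$ is controlled on $K$) and with small $|\delta_i|$. Expanding $f^m$ by the multinomial formula and using the commutation identity $\phi(D)^n(h\,e^{\mu z})=e^{\mu z}\phi(D+\mu)^n h$, the resonant single-index terms with $j=m$ are annihilated because $\lambda_i\in\phi^{-1}(0)$, while the $j=1$ mixed terms produce the leading contribution
$$
m\,u^{m-1}\sum_i\delta_i\,\phi(\lambda_i/m)^n\,e^{\lambda_i z/m}.
$$
The aim is to choose $N$, $(\lambda_i)$ and $(\delta_i)$ so that this sum matches $\bigl(v-\phi(D)^n(u^m)\bigr)/(m u^{m-1})$ on $K$, after a harmless perturbation ensuring that $u$ is nonvanishing on $K$. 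The amplification $|\phi(\lambda_i/m)^n|\sim(2e^{|\mathrm{Im}(\lambda_i)|/m})^n$ forces the corresponding $|\delta_i|$ to be extremely small, securing $\|f-u\|_K<\veps$.

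The hard part is the quantitative bookkeeping. One must simultaneously ensure: (i) $\|f-u\|_K<\veps$, which couples $|\delta_i|$ to the growth of $e^{\lambda_i z/m}$ on $K$; (ii) the approximability of the residue on $K$ by the exponential combinations just described, which invokes a density result for exponentials with frequencies drawn from both anti-diagonals (and possibly supplemented by contributions from other exponents $j\in\{2,\ldots,m-1\}$) in $\H(K)$; (iii) the negligibility on $K$ of the higher-order single-index mixed terms, the multi-index cross terms from the multinomial expansion, and the lower-order corrections to $\phi(D+\lambda_i/m)^n u^{m-1}\approx\phi(\lambda_i/m)^n u^{m-1}$. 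Each of these can be controlled by a careful interplay of $N$, $n$ and the growth rate of $|\mathrm{Im}(\lambda_i)|$, using the exponential type of $\phi$. Once this delicate balance is achieved, the smallness of $(\delta_i)$ automatically controls the lower powers $f^j$ as required by the reduction of \cite{BCP17,BCP18}, and the Baire-category argument then concludes the proof.
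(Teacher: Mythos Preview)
Your approach has a gap that looks fatal, and it lies precisely in the step you dismiss as automatic. You write that ``the smallness of $(\delta_i)$ automatically controls the lower powers $f^j$'', but for $j<m$ the dominant term in $\phi(D)^n(f^j)$ is $\phi(D)^n(u^j)$, which does not involve the $\delta_i$ at all and, since $|\phi(0)|=2$, grows like $2^n$ on $K$ for a generic $u$. Nothing in your construction makes $T^N(f^j)$ land in $W$ for $j<m$, which is exactly what Lemma~\ref{lem:criterion} requires. A second, independent problem is the control of the cross terms with $2\leq j\leq m-1$ in the expansion of $f^m$: along the curves $\Re z+|\Im z|\sim\log 4$ one has $|\phi(j\lambda/m)|$ of the same exponential order $e^{j|\Im\lambda|/m}$ as $|\phi(\lambda/m)|^j$, so once you tune $|\delta_i|\sim (2/|\phi(\lambda_i/m)|)^n$ to make the $j=1$ terms cancel the $2^n$-sized residue $\phi(D)^n(u^m)$, the $j$-th cross terms come out of order roughly $2^{jn}$ rather than $o(1)$. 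The density step (ii) is also unjustified: the zeros of $\phi$ do not accumulate in $\CC$, so $\textrm{span}\{e^{\lambda_i z/m}:\lambda_i\in\phi^{-1}(0)\}$ is not dense in $H(\CC)$.

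The paper's argument is short and goes in a completely different direction: it neither uses zeros of $\phi$ nor attempts to cancel any $2^n$-sized term. It simply verifies Assumption~(5) of Theorem~\ref{thm:philarge} with \emph{real} parameters $a=k\pi$ and $b=k\pi+\tfrac{\pi}{2m}$ for large $k$. For such arguments the factor $2e^{-z}$ is negligible, so $|\phi(db+(n-d)a)|\approx|\sin(d\pi/(2m))|<1$ for every $d<m$, while $|\phi(mb)|>1$. In the framework of the proof of Theorem~\ref{thm:philarge}, the element of $U$ is then approximated by exponentials $E(\gamma_l)$ with $\gamma_l$ near $a=k\pi$, where $|\phi|$ is small; this choice of base point (rather than your arbitrary $u$) is exactly what forces all lower powers to go to zero under $T^N$. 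The obstacle $|\phi(0)|>1$ is thus bypassed by relocating the eigenvector approximation, not overcome by amplification from large eigenvalues.
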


Another natural conjecture is that $\phi(D)$ always suppport a hypercyclic algebra as soon as $\phi$ is not a multiple of an exponential function.
We do not know if this conjecture is true. Nevertheless, we still get an interesting result if we weaken the conclusion.
Recall that a vector $x\in X$ is a supercyclic vector for $T\in\mathcal L(X)$ provided $\{\lambda T^n x;\ \lambda\in\CC,\ n\geq 0\}$ is dense in $X$. 
If $X$ is a topological algebra, then any subalgebra of $X$ consisting entirely (but zero) of supercyclic vectors for $T$ is said to be a supercyclic algebra.

\begin{theorem}\label{thm:mainpowers}
Let $\phi$ be a nonconstant entire function with exponential type. The following assumptions are equivalent:
\begin{enumerate}[(i)]
\item $\phi(D)$ supports a supercyclic algebra.
\item There exists $f\in H(\CC)$ such that, for all $m\geq 1$, $f^m$ is a hypercyclic vector for $\phi(D)$.
\item $\phi$ is not a multiple of an exponential function.
\end{enumerate}
\end{theorem}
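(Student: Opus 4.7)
The implications (i)$\Rightarrow$(iii) and (ii)$\Rightarrow$(iii) follow from the observation recalled in the introduction. If $\phi(z)=\mu e^{az}$ with $\mu\neq 0$, then $\phi(D)=\mu\tau_a$, and for any nonzero $f\in H(\CC)$ and any $m\geq 2$,
\[\{\sigma\phi(D)^n f^m:\sigma\in\CC,\,n\geq 0\}=\{\mu^n\sigma(\tau_{na}f)^m:\sigma\in\CC,\,n\geq 0\}\subset\{h^m:h\in H(\CC)\},\]
after choosing $\nu\in\CC$ with $\nu^m=\mu^n\sigma$. Consequently, the projective orbit of $f^m$ consists entirely of functions whose zeros have multiplicities divisible by $m$, a set which is not dense in $H(\CC)$ (for instance, $z$ has a simple zero). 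Hence $f^m$ is never supercyclic for $m\geq 2$. A supercyclic algebra contains a nonzero $f$ and therefore $f^2$ (algebras being multiplicatively closed), contradicting (i); and any $f$ as in (ii) has $f^2$ hypercyclic, hence supercyclic, contradicting the same. This proves (iii).

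For the converse implications, the plan is a Baire category argument. For (iii)$\Rightarrow$(ii) we show that for each $m\geq 1$, the set $\Omega_m:=\{f\in H(\CC):f^m\in HC(\phi(D))\}$ is a dense $G_\delta$: the $G_\delta$ property follows from the continuity of $f\mapsto f^m$ together with the fact that $HC(\phi(D))$ is a $G_\delta$ in $H(\CC)$ (and is nonempty since $\phi$ is nonconstant, by Godefroy--Shapiro), and density reduces to the transitivity statement that any $g\in H(\CC)$ can be approximated in the compact-open topology by some $f$ such that $\phi(D)^n f^m$ approximates a prescribed target $h$ for some $n$. Any $f\in\bigcap_m\Omega_m$ then satisfies (ii). For (iii)$\Rightarrow$(i) we prove the analogous statement that for each polynomial $P$ with $P(0)=0$, the set $\{f:P(f)\in SC(\phi(D))\}$ is dense $G_\delta$, and extract $f$ whose non-unital polynomial algebra lies in $SC(\phi(D))\cup\{0\}$.

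The common density construction exploits that $\phi$, having exponential type and not being a multiple of an exponential, must have at least one zero $\alpha_0$ (by Hadamard's factorization) and satisfies $|\phi|$ unbounded on $\CC$. We approximate $g$ by a linear combination $\sum_j a_j e^{\beta_j z}$ of eigenvectors of $\phi(D)$ and perturb it to $f=\sum_j(a_j+c_j)e^{\beta_j z}$ with small $c_j$ and carefully chosen $\beta_j$, so that the multinomial expansion of $f^m$ (respectively of $P(f)$) produces a dominant diagonal contribution $\sum_j(a_j+c_j)^m e^{m\beta_j z}$ which is amplified by $\phi(D)^n$ to approximate $h$, while cross-frequency terms decay. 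In case (i) the additional scalar $\lambda$ in the projective orbit provides extra flexibility, making this step easier than in case (ii).

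The main obstacle is the decay of cross terms, which reduces to a strict AM--GM-type inequality
\[|\phi(k_1\beta_1+\cdots+k_J\beta_J)|^m<\prod_j|\phi(m\beta_j)|^{k_j}\qquad\bigl(k_1+\cdots+k_J=m,\ \text{at least two }k_j>0\bigr).\]
When $\phi=\mu e^{az}$, $\log|\phi|$ is affine and the above degenerates to an equality, which is exactly where the non-exponential hypothesis is essential. The technical heart of the proof is to select the $\beta_j$ in regions where $\log|\phi|$ is sufficiently non-affine to guarantee the inequality uniformly in the multi-indices that arise, while keeping the $\beta_j$ dense enough to approximate an arbitrary $h\in H(\CC)$. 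A further subtlety specific to (i) is that the polynomial algebra generated by $f$ has uncountably many nonzero elements, so some care is required to extend the Baire conclusion from a countable dense family of polynomials to all $P$ with $P(0)=0$.
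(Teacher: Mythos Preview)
Your argument for the necessity of (iii) is correct and matches the paper's reference to \cite{ACPS07}.

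For (iii)$\Rightarrow$(i), you are working much harder than necessary. The paper simply observes that a hypercyclic algebra for $\lambda\phi(D)$ is automatically a supercyclic algebra for $\phi(D)$, since the projective orbit of any vector under $\phi(D)$ contains its orbit under $\lambda\phi(D)$. Choosing $\lambda\neq 0$ small enough that $|\lambda\phi(0)|<1$, Theorem~\ref{thm:mainconvolution}(1) applies to $\lambda\phi$ (which is still not a multiple of an exponential) and yields the desired supercyclic algebra in one line. This completely sidesteps the uncountability issue you flag at the end.

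For (iii)$\Rightarrow$(ii), your Baire framework is the right one and is exactly what Theorem~\ref{thm:powersgeneral} sets up. But the density construction you sketch has a genuine gap. You propose $f=\sum_j(a_j+c_j)e^{\beta_j z}$ using the \emph{same} frequencies $\beta_j$ that approximate the source $g$, and claim that the diagonal $\sum_j(a_j+c_j)^m e^{m\beta_j z}$ can be steered by $\phi(D)^n$ near an arbitrary target $h$. This cannot work: the $a_j$ are essentially pinned down by $g$ and the $c_j$ are small, so if $|\phi(m\beta_j)|>1$ then $\phi(D)^n$ of the diagonal is, up to a negligible perturbation, $\sum_j a_j^m\phi(m\beta_j)^n e^{m\beta_j z}$, a sequence blowing up along a direction determined by $g$, not by $h$. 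Your AM--GM inequality only controls the cross terms \emph{relative} to the diagonal; both explode.

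The paper instead uses two disjoint families of eigenvectors. One picks $a$ with $|\phi(a)|<1$ and $w_0$ with $|\phi(w_0)|>1$ such that $|\phi|<1$ on the ball $B\big(a,\tfrac{m-1}{m}|w_0-a|\big)$; inside $B(w_0,\delta)$ one selects a segment $[w_1,w_2]$ on which $\log|\phi|$ is strictly convex (Lemma~\ref{lem:convexity}, which is precisely where ``not a multiple of an exponential'' enters and which plays the role of your AM--GM step). Then one sets
\[
u=\sum_{l} a_l\,E(\gamma_l/m)+\sum_{j} c_j(N)\,E(\lambda_j/m),\qquad c_j(N)^m=b_j/\phi(\lambda_j)^N,
\]
with $\gamma_l\in B(a,\delta)$ and $\lambda_j\in[w_1,w_2]$. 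Now $c_j(N)\to 0$ forces $u\in U$; in $u^m$, every term involving at least one $\gamma_l$ lands (after dividing by $m$) in the region where $|\phi|<1$ and dies under $T^N$, the off-diagonal $\lambda$-terms die by strict convexity, and the diagonal $\lambda$-terms give exactly $\sum_j b_jE(\lambda_j)\in V$. The two missing ideas in your sketch are this separation of source and target frequencies, and the division of the exponents by $m$.
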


To our knowledge, the existence of $f\in H(\CC)$ such that $f^m\in HC\big(\phi(D)\big)$ for all $m\geq 1$
was only known when $\phi$ has subexponential type \cite{Be97}.

\subsection{Composition operators}

Let $\Omega\subset\CC$ be a simply connected domain and let $\varphi$ be a holomorphic self-map of $\Omega$. The composition operator $C_\varphi(f)=f\circ\varphi$ is a bounded operator on $H(\Omega)$ which is hypercyclic if and only if $\varphi$ is univalent and has no fixed point in $\Omega$. Moreover, in that case, if $P$ is a nonconstant polynomial, then $P(C_\varphi)$ is hypercyclic (see \cite{Bes13}). It was observed in \cite{BCP18} that $C_\varphi$ never supports a hypercyclic algebra and it was asked whether $P(C_\varphi)$ can support such an algebra. We provide an affirmative answer.

\begin{theorem}\label{thm:maincomposition}
Let $\Omega\subset\CC$ be a simply connected domain and let $\varphi$ be a holomorphic self-map of $\Omega$ which is univalent and has no fixed point in $\Omega$. Let also $P$ be a nonconstant polynomial which is not a multiple of $z$ and which satisfies $|P(1)|<1$. Then $P(C_\varphi)$ supports a hypercyclic algebra.
\end{theorem}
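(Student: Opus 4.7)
The plan is a Baire category argument on a translation model of the dynamics combined with a Runge approximation. By the Baker--Pommerenke linearization theorem for fixed-point-free univalent self-maps of a simply connected domain (noting that hypercyclic algebras are preserved under conjugation by algebra isomorphisms), I may assume $\Omega$ is a simply connected $G$ invariant under $T\colon z\mapsto z+1$ with $\varphi=T$. Writing $P(z)=\sum_{k=0}^d a_k z^k$ with $a_0=P(0)\neq 0$ (because $P$ is not a multiple of $z$) and $a_d\neq 0$, one has $P(C_\varphi)^n=\sum_{j=0}^{nd} q_j^{(n)} C_T^j$ with $q_j^{(n)}=[z^j]P(z)^n$, so that $q_0^{(n)}=P(0)^n$ and $q_{nd}^{(n)}=a_d^n$ are nonzero, while $\sum_j q_j^{(n)}=P(1)^n$ tends to $0$ by hypothesis.

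I next set up Baire: for a basis $(V_p)$ of nonempty open subsets of $H(G)$ and a compact exhaustion $(K_\ell)$, I define
\[
\A(m,p,\ell):=\{f\in H(G):\exists n,\ P(C_\varphi)^n f^m\in V_p\text{ and }\|P(C_\varphi)^n f^k\|_{K_\ell}<1/\ell\ \forall\, 1\leq k<m\},
\]
which is open. If each $\A(m,p,\ell)$ is dense, then for any $f$ in $\bigcap_{m,p,\ell}\A(m,p,\ell)$ and any nonzero polynomial $Q=\sum_{k=1}^m a_k z^k$ with $Q(0)=0$, the decomposition $P(C_\varphi)^n Q(f)=a_m P(C_\varphi)^n f^m+\sum_{k<m}a_k P(C_\varphi)^n f^k$ shows that the orbit of $Q(f)$ is dense; thus $\{Q(f):Q\neq 0,\ Q(0)=0\}$ is a hypercyclic algebra.

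To establish density of $\A(m,p,\ell)$, I fix a basic neighborhood of some $f_0$ and a target $g\in V_p$, shrink a compact $K\subset G$ (which only refines the neighborhood) so that the translates $K,K+1,\dots,K+nd$ are pairwise disjoint for the $n$ to be chosen large, and use Runge's theorem to produce $f\in H(G)$ matching $f_0$ on $K$, vanishing on most intermediate translates, and taking chosen holomorphic values $h_i$ on $K+j_i$ for well-selected indices $0<j_1<\dots<j_m=nd$ with $q_{j_i}^{(n)}\neq 0$. The Runge error is taken much smaller than $(\sum_j|q_j^{(n)}|)^{-1}$. For $z\in K$ one obtains
\[
P(C_\varphi)^n f^k(z)\approx P(0)^n f_0(z)^k+\sum_{i=1}^m q_{j_i}^{(n)} h_i(z+j_i)^k,\qquad 1\leq k\leq m,
\]
and the density requirement becomes an $m\times m$ nonlinear (Vandermonde-type) system in $h_1,\dots,h_m$: match $g(z)$ at $k=m$ and be $O(1/\ell)$ at $k<m$. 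The Jacobian of the power-sum map is $m!\prod_i q_{j_i}^{(n)}\prod_{i<i'}(h_{i'}-h_i)$, so the system is locally biholomorphic at points with distinct $h_i$, and a holomorphic solution on each simply connected compactum $K+j_i$ exists, with size-control handled by either the local holomorphic inverse of the power-sum map at $0$ (when targets are small) or $m$-th root extraction after a harmless perturbation of $g$ ensuring non-vanishing (when targets are large).

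The main obstacle is the global solvability together with size control of the Vandermonde system: one must select $n$ and indices $j_i$ yielding coefficients $q_{j_i}^{(n)}$ of well-balanced magnitudes and a solution that beats the stray contribution $P(0)^n f_0^k$. The hypothesis $|P(1)|<1$ is essential here: it produces, via $\sum_j q_j^{(n)}=P(1)^n\to 0$, the exponential cancellations that make the lower-power conditions $k<m$ solvable independently of the possibly large $|P(0)^n|$; combined with coefficient asymptotics for $P(z)^n$ obtained by saddle-point analysis (valid since $a_0,a_d\neq 0$), this yields enough usable indices $j_i$ and finishes the density argument.
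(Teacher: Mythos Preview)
Your approach is entirely different from the paper's. There, after the reduction to a translation or dilation model via the linear fractional model, the proof exploits the eigenvectors $E(\lambda)(z)=e^{\lambda z}$ (resp.\ $z^\lambda$) of $C_\psi$ and their multiplicative law $E(\lambda)E(\mu)=E(\lambda+\mu)$, and everything reduces to Corollary~\ref{cor:smalleigengeneral}: with $\phi(\lambda)=P(e^{a\lambda})$ (resp.\ $P(e^{\lambda\log r})$) one has $|\phi(0)|=|P(1)|<1$, and that is exactly how the hypothesis enters.

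In your scheme the hypothesis does not enter where you claim it does, and this is a genuine gap. On the fixed compact $K$ the quantity to control is $\sum_j q_j^{(n)} f^k(z+j)$, whose unavoidable stray contribution is $q_0^{(n)} f_0(z)^k=P(0)^n f_0(z)^k$; nothing in the assumptions bounds $|P(0)|$ (take $P(z)=2-2z$, where $P(1)=0$ but $|P(0)|=2$). The single linear relation $\sum_j q_j^{(n)}=P(1)^n\to 0$ says nothing about the nonlinear sums $\sum_j q_j^{(n)} f(z+j)^k$ unless $f$ is essentially constant along the orbit, which is incompatible with the disjoint-translate construction you set up. To kill $P(0)^n f_0^k$ you are therefore forced to take the $h_i$ exponentially large, and then both the Runge error analysis and the asserted holomorphic solvability of the weighted power-sum system (which you only verify locally, away from the branch locus $h_i=h_j$) become real obstacles rather than formalities; the appeal to ``saddle-point analysis'' does not address either point. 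There is also a more elementary issue: the compacts determining $V_p$ and the $1/\ell$-control are prescribed in advance and may have diameter larger than $1$, so their translates by $0,1,\dots,nd$ need not be pairwise disjoint; ``shrinking $K$'' enlarges the source neighbourhood but does nothing for these target constraints. As written, the density argument does not close.
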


\subsection{Backward shift operators}
So far, our examples live only in $F$-algebras (namely in metrizable and complete topological algebras without assuming that the distance is
induced by a norm) and the proofs of Theorems \ref{thm:mainconvolution} and 
\ref{thm:maincomposition} depend heavily on the non Banach structure of the ambient space.
We provide now an example in the Banach algebra $\ell^1(\NN)=\left\{(u_n)_{n\geq 0};\ \sum_n |u_n|<+\infty\right\}$ endowed with the convolution product $(u\star v)(k)=\sum_{j=0}^k u_j v_{k-j}.$ It was already sketched in \cite{BM09} that, denoting by $B$ the backward shift operator, $HC(2B)\cup\{0\}$ contains a nontrivial algebra. Again, the density of $\bigcup_n\ker (B^n)$ was the key for the proof.

We go much further (we denote by $\DD$ the open unit disk and by $\TT$ its boundary the unit circle).
\begin{theorem}\label{thm:mainbackward}
Let $P\in\CC[X]$ be nonconstant and let $B$ be the backward shift operator on $\ell^1(\NN)$. Then the following assertions are equivalent.
\begin{enumerate}[(i)]
\item $P(B)$ is hypercyclic.
\item $P(B)$ admits a hypercyclic algebra.
\item $P(\DD)\cap\TT\neq\varnothing$.
\end{enumerate}
\end{theorem}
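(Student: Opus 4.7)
The equivalence of (i) and (iii) follows from standard spectral theory. The vectors $u_\lambda=(\lambda^k)_{k\ge 0}$, $\lambda\in\DD$, are eigenvectors of $B$ on $\ell^1(\NN)$ with $P(B)u_\lambda=P(\lambda)u_\lambda$, and their linear span over any nonempty open subset of $\DD$ is dense in $\ell^1(\NN)$. Under (iii), the open mapping theorem produces both $\{\lambda\in\DD:|P(\lambda)|<1\}$ and $\{\lambda\in\DD:|P(\lambda)|>1\}$ as nonempty open subsets of $\DD$, and the Godefroy--Shapiro criterion yields hypercyclicity of $P(B)$; the reverse uses $\sigma(P(B))=P(\overline{\DD})$, the fact that a hypercyclic operator has spectrum meeting $\TT$, and the maximum modulus principle to push the spectral intersection from $P(\overline{\DD})$ down to $P(\DD)$. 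The implication (ii) $\Rightarrow$ (i) is immediate.

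For (iii) $\Rightarrow$ (ii) I propose a Baire category argument. Enumerate a countable dense family $(Q_k)_k$ of nonzero polynomials with $Q_k(0)=0$ and a countable base $(V_l)_l$ of nonempty open subsets of $\ell^1(\NN)$. For each triple $(k,l,q)$, the set
$$\mathcal{A}(k,l,q):=\{u\in\ell^1(\NN):\exists\,n\ge q,\ P(B)^n Q_k(u)\in V_l\}$$
is open by continuity of $u\mapsto P(B)^n Q_k(u)$. If each such set is dense, then $\bigcap_{k,l,q}\mathcal{A}(k,l,q)$ is a dense $G_\delta$ whose elements $u$ satisfy $Q(u)\in HC(P(B))$ for every nonzero polynomial $Q$ with $Q(0)=0$ (the passage from $(Q_k)$ to all such $Q$ is standard since $HC(P(B))$ is $G_\delta$ and $Q\mapsto Q(u)$ is continuous in the coefficients of $Q$); the non-unital subalgebra generated by $u$, together with $\{0\}$, is then a hypercyclic algebra.

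The crux is therefore the density of each $\mathcal{A}(k,l,q)$. Given $v\in\ell^1(\NN)$, $\rho>0$, $Q=\sum_{j=1}^d a_j X^j$ with $a_d\neq 0$, $y\in\ell^1(\NN)$, $\veps>0$ and $q\in\NN$, after a preliminary approximation I may assume that $v$ and $y$ are finitely supported; the task is to find $u=v+w$ with $\|w\|<\rho$ and $n\ge q$ such that $\|P(B)^n Q(u)-y\|<\veps$. My plan is to choose $w$ supported on indices $\ge N$ with $N$ of order $n\deg P$, and to exploit the Newton expansion
$$Q(v+w)=Q(v)+\sum_{j=1}^d w^{\star j}\star R_j(v),\qquad R_j(v)=\sum_{k=j}^d a_k\binom{k}{j}v^{\star(k-j)},\ R_d(v)=a_d.$$
The $j$-th summand is supported on $[jN,\infty)$, and $P(B)^n$ shifts supports at most $n\deg P$ to the left; with $N$ slightly greater than $n\deg P$, only the $j=1$ piece $P(B)^n(R_1(v)\star w)$ overlaps the support window of $y$, whereas the higher-order pieces remain concentrated at large indices and are pushed below $\veps$ in norm by taking $\|w\|$ small. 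Condition (iii) provides $\lambda\in\DD$ with $|P(\lambda)|=1$; constructing $w$ as a linear combination of eigenvectors $u_\mu$ close to $u_\lambda$ and using the hypercyclicity of $P(B)$ on the dense subspace these generate, one arranges $P(B)^n(R_1(v)\star w)$ to approximate the correction $y-P(B)^n Q(v)$ within $\veps$.

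The main obstacle is the nonlinearity of $u\mapsto Q(u)$: when $R_1(v)$ degenerates (e.g.\ when $v=0$ and $a_1=0$), the first-order perturbation vanishes, and one must promote the scheme to the first non-vanishing $R_{j_0}(v)$ and extract $j_0$-th convolution roots in $\ell^1(\NN)$, a task made manageable by the unital algebra structure (elements with non-zero zeroth coefficient admit $j_0$-th roots via Newton inversion). The spectral condition $|P(\lambda)|=1$ is essential throughout because it keeps $P(B)^n$ on scale on the chosen perturbations, which is what allows an arbitrary target of bounded norm to be matched with $\|w\|<\rho$.
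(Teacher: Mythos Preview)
Your reduction of (i)$\Leftrightarrow$(iii) is fine and matches the paper's remark that only (iii)$\Rightarrow$(ii) requires work. The proposal for (iii)$\Rightarrow$(ii), however, has a real gap.

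Two problems combine to break the argument. First, the two roles you assign to the perturbation $w$ are incompatible: you ask that $w$ be supported on $[N,\infty)$ (so the convolution powers $w^{\star j}$ separate by support) and simultaneously that $w$ be a linear combination of eigenvectors $u_\mu=(\mu^k)_k$ (so that $P(B)^n$ stays ``on scale''). An eigenvector $u_\mu$ with $\mu\neq 0$ is supported on all of $\NN$, so no nonzero linear combination of such vectors has support in $[N,\infty)$.

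Second, and more seriously, your claim that the higher-order pieces are ``pushed below $\varepsilon$ in norm by taking $\|w\|$ small'' is false in this algebra. Under the convolution product one has $u_\mu^{\star j}=(P_j(k)\mu^k)_k$ with $\deg P_j=j-1$, and for $|P(\mu)|=1$ the iterates satisfy $\big\|P(B)^n(k^{j-1}\mu^k)\big\|\asymp n^{j-1}$. Hence if $w=c\,u_\mu$ with $|c|$ small but fixed, then $\|P(B)^n(w^{\star j})\|\asymp |c|^{j}n^{j-1}\to\infty$ for every $j\ge 2$ as $n\to\infty$. A fixed small $\|w\|$ cannot suppress these terms; the crude bound $\|P(B)\|^n\|w\|^j$ is even worse since $\|P(B)\|>1$ in the interesting cases. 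The nilpotence-and-support heuristic that works for $2B$ (where $B^n$ kills finitely supported data) does not transfer to general $P(B)$ with $P(0)\neq 0$.

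The paper's proof takes the opposite tack: rather than trying to make the \emph{linear} contribution dominate, it lets the perturbation depend on $N$ and scales it so that the \emph{top} power dominates. One takes $u=\sum a_l(\gamma_l^k)+\sum c_j(N)(\lambda_j^k)$ with $|P(\gamma_l)|<1$, $|P(\lambda_j)|=1$, and $c_j(N)\sim N^{-(m-1)/m}$. Using the exact structure of $u_\lambda^{\star d}$ (a polynomial of degree $d-1$ in $k$ times $\lambda^k$) and a precise asymptotic for $P(B)^N(k^s\lambda^k)$, one gets $\|P(B)^N(c_\bj\cdot u_\lambda^{\star d})\|\lesssim |c(N)|^{d}N^{d-1}\sim N^{d/m-1}$, which tends to $0$ for $d<m$ and stays bounded for $d=m$; the constants are then tuned so that the $d=m$ term lands exactly on the target. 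This delicate balance $|c|^m n^{m-1}\sim 1$ is the missing idea in your sketch, and it cannot be recovered from a first-order perturbation argument.
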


\subsection{Organization of the paper and strategy for the proofs.}
The proofs of our results develop a method initiated in \cite{BCP18} and use eigenvalues and eigenvectors of our operators.
In Section \ref{sec:smalleigenvalues}, we use eigenvalues of modulus slightly bigger than 1 to prove half of Theorem \ref{thm:mainconvolution} and Theorem \ref{thm:maincomposition}. 
In Section \ref{sec:largeeigenvalues}, we use eigenvalues with large modulus to prove the remaining part of Theorem \ref{thm:mainconvolution} whereas in Section \ref{sec:unimodular}, we use unimodular eigenvalues to prove Theorem \ref{thm:mainbackward}. There is also a significant difference between the first two cases and the last one: the product of two eigenvectors can or cannot be still an eigenvector. 
The latter situation is of course more difficult!
In Section \ref{sec:powers}, we come back to convolution operators and study the case $|\phi(0)|>1$. As a consequence we prove Theorems \ref{thm:exampleconvolution} and \ref{thm:mainpowers}.

Of course, if we know that an operator admits a hypercyclic algebra, then it is natural to ask how big it can be. Can it be dense? Can it be infinitely generated? These questions were investigated
very recently in \cite{BP18} where it is shown that for most of the examples exhibited in \cite{BCP18}, one can improve the construction to get a dense and infinitely generated hypercyclic algebra.
In Section \ref{sec:infinitelygenerated}, we show how to modify our proofs to obtain a similar improvement. We choose to postpone this in a separate section because the arguments
become more technical and we think that the ideas appear more clearly by handling separately the case of singly generated algebras.

We end up this introduction with the following lemma taken from \cite[Remark 8.28]{BM09}, which gives a criterion for the existence of a hypercyclic algebra.
It can be seen as a strong form of the property of topological transitivity. 

\begin{lemma}\label{lem:criterion}
Let $T$ be a continuous operator on some separable $F$-algebra $X$. Assume that, for any pair $(U,V)$ of nonempty open sets in $X$, for any open neighbourhood $W$ of zero, 
and for any positive integer $m$, one can find $u\in U$ and an integer $N$ such that $T^N(u^n)\in W$ for all $n<m$ and $T^N(u^m)\in V$. Then $T$ admits a hypercyclic algebra.
\end{lemma}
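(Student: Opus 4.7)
\medskip
\noindent\textbf{Proof plan.} The plan is to apply Baire's theorem in $X$ and produce a dense $G_\delta$ set of vectors $f$, each of which generates a hypercyclic algebra. Fix a countable base $(V_k)_k$ of nonempty open subsets of $X$ and a countable base $(W_j)_j$ of neighborhoods of $0$. For each triple $(m,k,j)$ with $m\geq 1$, define
\[
\mathcal{A}(m,k,j)=\bigcup_{N\geq 0}\bigl\{f\in X: T^N(f^n)\in W_j\text{ for every }1\leq n<m,\ \text{and}\ T^N(f^m)\in V_k\bigr\}.
\]
Each $\mathcal{A}(m,k,j)$ is open by continuity of $T^N$ and of the power maps on the topological algebra $X$, and the hypothesis of the lemma, applied to an arbitrary nonempty open $U$, to $V=V_k$, $W=W_j$ and the integer $m$, exhibits a point of $\mathcal{A}(m,k,j)\cap U$. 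Hence each $\mathcal{A}(m,k,j)$ is dense and, by Baire, the intersection $\mathcal{G}:=\bigcap_{m,k,j}\mathcal{A}(m,k,j)$ is a dense $G_\delta$, in particular nonempty.

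I would then claim that for any $f\in\mathcal{G}$ the subalgebra $\mathcal{A}(f):=\{P(f):P\in\CC[z],\ P(0)=0\}$ witnesses that $T$ has a hypercyclic algebra. Given a nonzero polynomial $P(z)=\sum_{n=1}^m a_nz^n$ with $a_m\neq0$ and a nonempty open set $V\subset X$, the task is to produce $N$ with $T^N(P(f))\in V$. I would pick $y\in V$ and a neighborhood $\Omega_0$ of $0$ such that $y+\Omega_0\subset V$, and then, using only continuity of addition and of scalar multiplication in $X$, select a neighborhood $W$ of $0$ and an open neighborhood $V'$ of $a_m^{-1}y$ with
\[
a_mV'+a_1W+\cdots+a_{m-1}W\subset V.
\]
Choosing $k$ with $V_k\subset V'$ and $j$ with $W_j\subset W$ (both possible since $(V_k)_k$ and $(W_j)_j$ are bases), the membership $f\in\mathcal{A}(m,k,j)$ yields some $N$ such that $T^N(f^n)\in W$ for $n<m$ and $T^N(f^m)\in V'$, whence $T^N(P(f))=\sum_{n=1}^m a_nT^N(f^n)\in V$.

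Specializing $V$ to an open set that avoids $0$ forces $P(f)\neq 0$, so $\mathcal{A}(f)$ is a nontrivial subalgebra of $X$, and the construction above shows that each of its nonzero elements belongs to $HC(T)$. The only slightly delicate step is the neighborhood bookkeeping used to produce $V'$ and $W$: because $X$ is only assumed to be an $F$-algebra, with no norm or local convexity, the obvious $\varepsilon/m$-type estimate available in a Banach algebra must be replaced by iterated use of the continuity of addition and scalar multiplication. Everything else in the argument is routine.
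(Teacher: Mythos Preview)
The paper does not actually prove this lemma; it merely states it as ``taken from \cite[Remark 8.28]{BM09}'' and uses it as a black box throughout. Your argument is correct and is precisely the standard Baire-category proof one finds in that reference: one builds the dense $G_\delta$ set $\mathcal G=\bigcap_{m,k,j}\mathcal A(m,k,j)$, and for any $f\in\mathcal G$ the singly generated algebra $\{P(f):P(0)=0\}$ consists, except for $0$, of hypercyclic vectors. Your handling of the only non-trivial point---selecting $V'$ and $W$ so that $a_mV'+a_1W+\cdots+a_{m-1}W\subset V$ via continuity of addition and scalar multiplication rather than norm estimates---is exactly what is needed in the $F$-algebra setting, and the rest is routine.
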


%
%
%

\section{Small eigenvalues}\label{sec:smalleigenvalues}

\subsection{A general result for operators with small eigenvalues}

We shall deduce part of Theorem \ref{thm:mainconvolution} and Theorem \ref{thm:maincomposition} from the following general result.

\begin{theorem}\label{thm:philarge}
Let $X$ be an F-algebra and let $T\in\mathcal L(X)$. Assume that there exist a function $E:\CC\to X$ and
an entire function $\phi:\CC\to \CC$ satisfying the following assumptions:
\begin{enumerate}
\item for all $\lambda\in\CC$,\ $TE(\lambda)=\phi(\lambda)E(\lambda)$;
\item for all $\lambda,\mu\in\CC$, $E(\lambda)E(\mu)=E(\lambda+\mu)$;
\item for all $\Lambda\subset\CC$ with an accumulation point, $\textrm{span}\left(E(\lambda);\ \lambda\in\Lambda\right)$ is dense in $X$;
\item $\phi$ is not a multiple of an exponential function;
\item for all $m\in\NN$, there exist $a,b\in\CC$ such that $|\phi(mb)|>1$ and, for all $n\in\{1,\dots,m\}$, all $d\in\{0,\dots,n\}$, with $(n,d)\neq (m,m)$, $|\phi(db+(n-d)a)|<1$.
\end{enumerate}
Then $T$ supports a hypercyclic algebra.
\end{theorem}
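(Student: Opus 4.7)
I would apply the criterion of Lemma~\ref{lem:criterion}: given nonempty open sets $U,V\subset X$, a neighborhood $W$ of $0$, and an integer $m\ge 1$, I need to produce $u\in U$ and $N\ge 1$ satisfying $T^N(u^n)\in W$ for every $n<m$ and $T^N(u^m)\in V$.

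First, invoke hypothesis~(5) to pick $(a,b)\in\CC^2$ with $|\phi(mb)|>1$ and $|\phi(db+(n-d)a)|<1$ for every $(n,d)\neq(m,m)$ with $1\le n\le m$ and $0\le d\le n$. Strictness together with continuity of $\phi$ makes these inequalities persist on an open neighborhood of $(a,b)$; in particular the set of admissible $b'$'s is open, and so is its image under $b'\mapsto mb'$. Density hypothesis~(3) makes $\mathrm{span}\{E(mb'):b'\text{ admissible}\}$ dense in $X$, so I may fix $v_*=\sum_{\ell=1}^L d_\ell E(mb_\ell)\in V$ with each $b_\ell$ close to $b$. Similarly, (5) applied with $d=0$ gives $|\phi(na)|<1$ for $1\le n\le m$, so continuity yields $|\phi(n\mu)|<1$ for $\mu$ in a small disc around $a$ and $1\le n\le m$; another application of~(3) produces $u_*=\sum_{i=1}^r c_i E(\mu_i)\in U$ with each $\mu_i$ close to $a$.

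Consider the candidate
\[
u\;=\;u_*\,+\,\alpha\,E(a)\,+\,\sum_{\ell=1}^L\beta_\ell\,E(b_\ell),
\]
with $\alpha$ and the $\beta_\ell$'s to be chosen small (so that $u\in U$). Property~(2) turns $u^n$ into a finite sum of eigenvectors $E(\xi)$ indexed by multi-degrees $(\vec l,j,\vec t)$ of total weight $n$, with $\xi=\sum_i l_i\mu_i+ja+\sum_\ell t_\ell b_\ell$; by~(1), applying $T^N$ multiplies each $E(\xi)$ by $\phi(\xi)^N$. The goal is to show $|\phi(\xi)|<1$ for every such $\xi$ save the ``diagonal'' choices $\xi=mb_\ell$ (which occur only when $n=m$, $\vec l=\vec 0$, $j=0$, and $\vec t=m e_\ell$). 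Granting this, for $n<m$ every term in $T^Nu^n$ decays exponentially in $N$, while for $n=m$,
\[
T^N u^m\;=\;\sum_{\ell=1}^L\beta_\ell^m\,\phi(mb_\ell)^N\,E(mb_\ell)\;+\;o(1).
\]
Setting $\beta_\ell:=(d_\ell/\phi(mb_\ell)^N)^{1/m}$ makes the main sum equal to $v_*$, and since $|\phi(mb_\ell)|>1$ forces $|\beta_\ell|\to 0$ (with $\alpha\to 0$ as well), $u$ lies in $U$ for $N$ large enough and the criterion is verified.

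The principal obstacle lies in the claim $|\phi(\xi)|<1$ for the \emph{mixed} exponents $\xi=\sum_\ell t_\ell b_\ell$ with $|\vec t|=m$ but $\vec t\neq m e_\ell$: if the $b_\ell$'s are simply taken close to $b$, such a sum is close to $mb$, where $|\phi|>1$, so extra dominating terms would contaminate $T^Nu^m$. The resolution should be to spread the $b_\ell$'s inside the open set where~(5) continues to hold, so that every mixed sum leaves the small region $\{|\phi|>1\}$ surrounding $mb$ while keeping the resulting $v_*$ in $V$. Making this spreading compatible with the density~(3) applied to the $b_\ell$'s is the technical heart of the argument; hypothesis~(4) is what prevents $|\phi|$ from being too rigid to admit such a choice.
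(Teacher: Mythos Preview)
You have correctly set up the proof and isolated the genuine difficulty: the ``mixed'' terms $\beta_1^{t_1}\cdots\beta_L^{t_L}\,\phi(\sum_\ell t_\ell b_\ell)^N\,E(\sum_\ell t_\ell b_\ell)$ with $\sum t_\ell=m$ and $\vec t$ non-diagonal. But your proposed resolution---spreading the $b_\ell$'s so that every mixed sum lands in $\{|\phi|<1\}$---cannot be made to work. In order to invoke density~(3) you need the points $mb_\ell$ to be drawn from a set with an accumulation point; but as soon as two of the $b_\ell$'s are close to a common limit $b_*$, the mixed sum $(m-1)b_{\ell_1}+b_{\ell_2}$ is close to $mb_*$, hence close to $mb_{\ell_1}$, where $|\phi|>1$. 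So the conditions ``$|\phi(mb_\ell)|>1$ for all $\ell$'', ``$|\phi(\sum t_\ell b_\ell)|<1$ for all non-diagonal $\vec t$'', and ``the $b_\ell$'s accumulate'' are mutually incompatible. The mixed terms will inevitably have $|\phi(\xi)|>1$.

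What actually controls the mixed terms is not the size of $|\phi(\xi)|$ alone but the ratio
\[
\frac{|\phi(\sum_\ell t_\ell b_\ell)|}{\prod_\ell |\phi(mb_\ell)|^{t_\ell/m}},
\]
since $|\beta_\ell|^{t_\ell}\asymp|\phi(mb_\ell)|^{-Nt_\ell/m}$. This ratio is $<1$ precisely when $t\mapsto\log|\phi|$ is \emph{strictly convex} along the convex combination. This is exactly where hypothesis~(4) enters, and it is the key idea you are missing: writing $\phi=e^{h}$ locally near $w_0=mb$, the fact that $\phi$ is not a scalar multiple of an exponential forces $h$ to be non-affine, so $h''\not\equiv0$ near $w_0$; one can then pick a short segment $[w_1,w_2]$ near $w_0$ on which $t\mapsto\log|\phi(tw_1+(1-t)w_2)|$ is strictly convex (this is Lemma~\ref{lem:convexity} in the paper). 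Choosing the $\lambda_j=mb_\ell$ on this segment, the convexity inequality yields $|\phi(\frac{\lambda_{j_1}+\cdots+\lambda_{j_m}}m)|<\prod_i|\phi(\lambda_{j_i})|^{1/m}$ for every non-diagonal $\bj$, which is exactly what forces the mixed terms to vanish. With this device in hand the rest of your argument goes through essentially as written (the extra $\alpha E(a)$ term is unnecessary).
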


We start with a lemma which explains why we have to exclude multiples of exponential functions.

\begin{lemma}\label{lem:convexity}
 Let $\phi$ be an entire function which is not a multiple of an exponential function. Then, for any $w_0\in\CC$
 with $\phi(w_0)\neq 0$ and any $\delta>0$, there exist $w_1,w_2\in B(w_0,\delta)$, $w_1\neq w_2$, such that the map
 $[0,1]\to\RR$, $t\mapsto \log|\phi(tw_1+(1-t)w_2)|$ is stricly convex.
\end{lemma}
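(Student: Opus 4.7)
The plan is to reduce strict convexity of $t\mapsto \log|\phi(tw_1+(1-t)w_2)|$ to the non-vanishing of the second derivative of a holomorphic function, and then exploit the hypothesis by way of the logarithmic derivative of $\phi$.

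First, I would shrink $\delta$ so that $\phi$ does not vanish on $B(w_0,\delta)$, which is possible because $\phi(w_0)\neq 0$. On this ball, set $\psi=\phi'/\phi$, which is holomorphic. The key algebraic observation is that $\phi$ is a multiple of an exponential function if and only if $\psi$ is constant: indeed, if $\psi\equiv c$ on an open set, then $(\phi(z)e^{-cz})'\equiv 0$, so $\phi(z)=Ae^{cz}$ by analytic continuation. By the hypothesis, $\psi$ is therefore not constant on $B(w_0,\delta)$, hence $\psi'$ is a nonzero holomorphic function, and its zeros are isolated. Pick $w^*\in B(w_0,\delta)$ with $\psi'(w^*)\neq 0$.

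Next, I would compute the second derivative. For $w_1,w_2$ close to $w^*$, set $h=w_1-w_2$ and $F(t)=\log\phi(w_2+th)$, holomorphic in a neighbourhood of $[0,1]$ since $\phi$ has no zero near the segment. Then
\[
\tfrac{d^2}{dt^2}\log|\phi(w_2+th)|=\operatorname{Re} F''(t)=\operatorname{Re}\bigl(h^{2}\psi'(w_2+th)\bigr).
\]
So the desired strict convexity follows from $\operatorname{Re}(h^{2}\psi'(w))>0$ along the segment from $w_2$ to $w_1$.

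Finally, I choose $h$ of small modulus with $\arg(h^2)=-\arg\psi'(w^*)$, so that $h^{2}\psi'(w^*)=|h|^{2}|\psi'(w^*)|>0$, and take $w_1=w^*+h/2$, $w_2=w^*-h/2$ (both in $B(w_0,\delta)$ for $|h|$ small enough). By continuity of $\psi'$, the quantity $\operatorname{Re}(h^{2}\psi'(w))$ stays close to $|h|^{2}|\psi'(w^*)|>0$ uniformly on the segment $[w_2,w_1]$, giving $F''(t)>0$ on $[0,1]$ and hence strict convexity. The only delicate point is the equivalence between "$\phi$ is a multiple of an exponential" and "$\psi$ is constant", but this is immediate from analytic continuation as above; everything else reduces to choosing the direction of $h$ to align with the argument of $\psi'(w^*)$.
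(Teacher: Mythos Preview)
Your proof is correct and follows essentially the same approach as the paper's. Both arguments locally write $\phi=e^{h}$ (equivalently, work with $\psi=\phi'/\phi=h'$), observe that the hypothesis forces $h''=\psi'$ to be not identically zero, pick a nearby point where it does not vanish, and then choose a direction so that $\mathrm{Re}(h^{2}\psi')>0$; the paper does this via an explicit Taylor expansion of $\mathrm{Re}\,h$, whereas you do it directly via the argument of $\psi'(w^{*})$ and a continuity argument on the segment, which is arguably cleaner (your final ``$F''(t)>0$'' should of course read $\mathrm{Re}\,F''(t)>0$).
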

\begin{proof}
 Since $\phi(w_0)\neq 0$, there exist some neighbourhood $V$ of $w_0$ and a holomorphic function $h:V\to\CC$
 such that $\phi(z)=\exp\big(h(z)\big)$ for all $z\in V$. Since $\phi$ is not a multiple of an exponential function,
 we know that $h$ is not an affine map. Thus there exists $w_1\in B(w_0,\delta)\cap V$ such that $h''(w_1)\neq 0$. Without loss
 of generality, we assume that $w_1=0$ and we write $h(z)=\sum_{k=0}^{+\infty}a_k z^k$. Then 
 \begin{align*}
  \log|\phi(z)|&=\Re e \big(h(z)\big)\\
  &=\Re e(a_0)+\left(\Re e(a_1)x-\Im m(a_1)y\right)\\
  &\quad+\left(\Re e(a_2)\left(x^2-y^2\right)-2\Im m(a_2)xy\right)+o\left(x^2+y^2\right)
 \end{align*}
if $z=x+iy$. Since $a_2\neq 0$, one may find $(x_0,y_0)\in\RR^2$ with 
$$\Re e(a_2)\left(x_0^2-y_0^2\right)-2 \Im m(a_2)x_0y_0>0.$$
Then $g(t)=\Re e \left(h\left(t\left(x_0+iy_0\right)\right)\right)=b_0+b_1t+b_2t^2+o(t^2)$ with $b_2>0$ is strictly convex around $0$.
 \end{proof}
 
 \begin{proof}[Proof of Theorem \ref{thm:philarge}]
  Let $U,V,W$ be nonempty open subsets of $X$ with $0\in W$ and let $m\geq 1$. By Lemma \ref{lem:criterion},
  it suffices to find $u\in U$ and $N\in\NN$ so that
\begin{align}
  T^N(u^n)&\in W,\ n=1,\dots,m-1\label{eq:belongs1}\\
  T^N(u^m)&\in V.\label{eq:belongs2}
\end{align}

The assumptions give us for this value of $m$ two complex numbers $a$ and $b$. We set $w_0=mb$. We then consider $\delta>0$ very small and $w_1,w_2\in B(w_0,\delta)$ such that 
\begin{itemize}
\item $t\in[0,1]\mapsto \log\left|\phi\left(tw_1+(1-t)w_2\right)\right|$ is strictly convex;
\item $|\phi|>1$ on $[w_1,w_2]$;
\item for all $n\in\{1,\dots,m\}$, for all $d\in\{0,\dots,n\}$ with $(n,d)\neq (m,m)$, 
for all $\lambda_1,\dots,\lambda_d\in [w_1,w_2]$ and all $\gamma_1,\dots,\gamma_{n-d}\in B(a,\delta)$, 
\begin{equation}\label{eq:philarge}
\left|\phi\left(\frac{\lambda_1+\cdots+\lambda_d}m+\gamma_1+\cdots+\gamma_{n-d}\right)\right|<1.
\end{equation}
\end{itemize}
We may ensure this last property because
$$\frac{\lambda_1+\cdots+\lambda_d}m+\gamma_1+\cdots+\gamma_{n-d}={db}+(n-d)a+z$$
where the norm of $z$ is controlled by $\delta$.

Since $B(a,\delta)$ and $[w_1,w_2]$ have accumulation points, we may find 
 $p,q\in\NN$, complex numbers $a_1,\dots,a_p$, $b_1,\dots,b_q$,
 complex numbers $\gamma_1,\dots,\gamma_p\in B(a,\delta)$ and complex numbers $\lambda_1,\dots,\lambda_q$ in $[w_1,w_2]$ such that
$$\sum_{l=1}^p a_l E\left(\gamma_l\right)\in U\textrm{ and }\sum_{j=1}^q b_j E\left(\lambda_j \right)\in V.$$
For $N\geq 1$ and $j\in\{1,\dots,q\}$, let $c_j:=c_j(N)$ be any complex number satisfying $c_j^m=b_j/\big(\phi(\lambda_j)\big)^{N}$ and
define
$$u:=u_N=\sum_{l=1}^p a_l E\left(\gamma_l\right)+\sum_{j=1}^q c_j E\left(\lambda_j/m\right).$$
We claim that, for $N$ large enough, $u_N$ belongs to $U$ and satisfies \eqref{eq:belongs1} and \eqref{eq:belongs2}.
That $u_N$ belongs to $U$ is clear, since $c_j(N)$ tends to zero as $N$ goes to infinity. In order to prove the other points,
we need to compute $u^n$ for $1\leq n\leq m$. To simplify the notations, let $I_p=\{1,\dots,p\}$ and for a multi-index
$\mathbf l\in I_p^d$, $a_{\mathbf l}$ will stand for $a_{l_1}\cdots a_{l_d}$ with the convention that an empty product is equal to 1. Then we may write
$$u^n=\sum_{d=0}^n \sum_{\substack{\bl\in I_p^{n-d} \\\bj\in I_q^d}}\alpha(\bl,\bj,d,n)a_\bl c_\bj E\left(\gamma_{l_1}+\cdots+\gamma_{l_{n-d}}+\frac{\lambda_{j_1}+\cdots+\lambda_{j_d}}m\right)$$
for some coefficients $\alpha(\bl,\bj,d,n)$ that we do not try to compute, but which does not depend on $N$.
To prove that $T^N (u^n)$ belongs to $W$ for $N$ large enough and $1\leq n<m$, we only have to prove that, for any $d\in\{0,\dots,n\}$ and any $\bl\in I_{p}^{n-d}$, any $\bj\in I_q^d$,
$$c_\bj(N)\left(\phi\left(\gamma_{l_1}+\cdots+\gamma_{l_{n-d}}+\frac{\lambda_{j_1}+\cdots+\lambda_{j_d}}m\right)\right)^N\longrightarrow_{N\to+\infty}0.$$
This follows from \eqref{eq:philarge} and the fact that $c_\bj(N)$ tends to zero.

The case $n=m$ is slightly different. We denote by $D_q$ the diagonal of $I_q^m$, namely 
the $m$-uples $(j,\dots,j)$, $1\leq j\leq q$. Then we decompose $u^m$ into
\begin{align*}
 u^m&=\sum_{d=0}^{m-1}\sum_{\substack{\bl\in I_p^{m-d} \\\bj\in I_q^d}}\alpha(\bl,\bj,d,m)a_\bl c_\bj E\left(\gamma_{l_1}+\cdots+\gamma_{l_{n-d}}+\frac{\lambda_{j_1}+\cdots+\lambda_{j_d}}m\right)\\
 &\quad+\sum_{\bj\in I_q^m\backslash D_q}\alpha(\bj,m)c_\bj E\left(\frac{\lambda_{j_1}+\cdots+\lambda_{j_m}}m\right)\\
 &\quad+\sum_{j=1}^q c_j^m E\left(\lambda_j\right)\\ 
 &=: v_1+v_2+v_3.
\end{align*}
The same considerations as above show that $T^N v_1$ tends to zero as $N$ goes to infinity. That $T^N v_2$ tends also to zero follows from a convexity argument. 
Indeed, for $\bj\in I_q^m\backslash D_q$, the strict convexity of the map $t\mapsto \log\left|\phi\left(tw_1+(1-t)w_2\right)\right|$ implies that 
\[ \left|\phi\left(\frac{\lambda_{j_1}+\cdots+\lambda_{j_m}}m\right)\right|< \left|\phi\left(\lambda_{j_1}\right)\right|^{1/m}\cdots  \left|\phi\left(\lambda_{j_m}\right)\right|^{1/m}.\]
Moreover, 
\[ |c_\bj(N)| \times \left|\phi\left(\frac{\lambda_{j_1}+\cdots+\lambda_{j_m}}m\right)\right|^N \leq |b_\bj|^{1/m}\left|
\frac{\phi\left(\frac{\lambda_{j_1}+\cdots+\lambda_{j_m}}m\right)}{\phi\left(\lambda_{j_1}\right)^{1/m}\cdots \phi\left(\lambda_{j_m}\right)^{1/m}}\right|^N.
\]
Since the left hand side of this inequality goes to zero, we get that $T^N v_2$ tends to zero.
We conclude the proof by observing that 
$$T^N v_3=\sum_{j=1}^q b_j E\left(\lambda_j \right).$$
 \end{proof}
 
For the applications, we emphasize two corollaries of Theorem \ref{thm:philarge}.

\begin{corollary}\label{cor:smalleigengeneralbis}
Let $X$ be an F-algebra and let $T\in\mathcal L(X)$. Assume that there exist a function $E:\CC\to X$ and
an entire function $\phi:\CC\to \CC$ satisfying the following assumptions:
\begin{enumerate}
\item for all $\lambda\in\CC$,\ $TE(\lambda)=\phi(\lambda)E(\lambda)$;
\item for all $\lambda,\mu\in\CC$, $E(\lambda)E(\mu)=E(\lambda+\mu)$;
\item for all $\Lambda\subset\CC$ with an accumulation point, $\textrm{span}\left(E(\lambda);\ \lambda\in\Lambda\right)$ is dense in $X$;
\item $\phi$ is not a multiple of an exponential function;
\item for all $\rho\in (0,1)$, there exists $w_0\in\CC$ with $\left|\phi\left(w_0\right)\right|>1$ and, for all $r\in (0,\rho]$, $\left|\phi\left(rw_0\right)\right|<1$.
\end{enumerate}
Then $T$ supports a hypercyclic algebra.
\end{corollary}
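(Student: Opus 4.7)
The plan is to reduce the corollary directly to Theorem \ref{thm:philarge} by showing that condition (5) of the corollary implies condition (5) of the theorem. Fix $m\geq 1$; I need to produce $a,b\in\CC$ verifying $|\phi(mb)|>1$ together with $|\phi(db+(n-d)a)|<1$ for every $(n,d)\neq(m,m)$ with $n\in\{1,\dots,m\}$ and $d\in\{0,\dots,n\}$. The natural try is to make every relevant argument lie on the ray $\RR_+ w_0$ furnished by the corollary's hypothesis, so that the scalar single-variable information $|\phi(rw_0)|<1$ for $r\in(0,\rho]$ can be exploited.

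Concretely, I would choose $\rho\in\bigl((m-1)/m,1\bigr)$ and apply (5) to obtain $w_0\in\CC$ with $|\phi(w_0)|>1$ and $|\phi(rw_0)|<1$ for every $r\in(0,\rho]$. Then I would set
\[
b=\frac{w_0}{m},\qquad a=\veps w_0,
\]
where $\veps>0$ is to be chosen small. With this choice $mb=w_0$, hence $|\phi(mb)|>1$, and for any admissible $(n,d)$ we have
\[
db+(n-d)a = \Bigl(\tfrac{d}{m}+(n-d)\veps\Bigr) w_0 =: r(n,d)\, w_0.
\]
The exclusion $(n,d)\neq(m,m)$ combined with $d\leq n\leq m$ forces $d\leq m-1$, so $d/m\leq (m-1)/m<\rho$. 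Choosing $\veps$ so small that $(m-1)/m+m\veps\leq \rho$ guarantees $r(n,d)\leq \rho$. Since $n\geq 1$, one also has $r(n,d)\geq \min(1/m,\veps)>0$, so $r(n,d)\in(0,\rho]$ and the corollary's hypothesis gives $|\phi(r(n,d)w_0)|<1$, which is exactly the desired estimate.

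Conditions (1)--(4) of Theorem \ref{thm:philarge} coincide with those of the corollary, so the reduction is complete and the existence of a hypercyclic algebra follows. There is no serious obstacle here: the only thing to be careful about is the book-keeping ensuring that $r(n,d)$ stays strictly positive and bounded above by $\rho$ for every admissible pair $(n,d)$, which is achieved by the choice $\rho>(m-1)/m$ together with $\veps$ small enough.
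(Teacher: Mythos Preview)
Your proof is correct and follows essentially the same approach as the paper's: both set $b=w_0/m$ and take $a$ to be a small positive multiple of $w_0$ so that every $db+(n-d)a$ with $(n,d)\neq(m,m)$ lands at $r w_0$ with $r\in(0,\rho]$. The only cosmetic differences are that the paper picks $\veps\in(0,1/m)$ first and then defines $\rho=(m-1)/m+m\veps$, and normalizes $a=\veps w_0/m$ rather than $a=\veps w_0$; your verification that $r(n,d)>0$ is slightly more explicit than the paper's.
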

\begin{proof}
We show that Assumption (5) of Theorem \ref{thm:philarge} is satisfied. Let $m\geq 1$ and $\veps\in(0,1/m)$. 
Set $\rho=\frac{m-1}m+m\veps$. We get the existence of $w_0$. We set $b=w_0/m$ and $a=\veps w_0/m$. Then, for any $d\leq m-1$ and any $n\leq m$, 
$$|db+(n-d)a|\leq \left(\frac{m-1}m+\veps\right)|w_0|\leq \rho |w_0|$$
showing that $|\phi(db+(n-d)a)|<1$.
\end{proof}

\begin{corollary}\label{cor:smalleigengeneral}
Let $X$ be an F-algebra and let $T\in\mathcal L(X)$. Assume that there exist a function $E:\CC\to X$ and
an entire function $\phi:\CC\to \CC$ satisfying the following assumptions:
\begin{enumerate}
\item for all $\lambda\in\CC$,\ $TE(\lambda)=\phi(\lambda)E(\lambda)$;
\item for all $\lambda,\mu\in\CC$, $E(\lambda)E(\mu)=E(\lambda+\mu)$;
\item for all $\Lambda\subset\CC$ with an accumulation point, $\textrm{span}\left(E(\lambda);\ \lambda\in\Lambda\right)$ is dense in $X$;
\item $\phi$ is not a multiple of an exponential function;
\item $|\phi(0)|<1$.
\end{enumerate}
Then $T$ supports a hypercyclic algebra.
\end{corollary}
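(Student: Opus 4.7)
The plan is to reduce Corollary~\ref{cor:smalleigengeneral} to Corollary~\ref{cor:smalleigengeneralbis} by verifying hypothesis~(5) of the latter. Fix $\rho\in(0,1)$; the task is to produce $w_0\in\CC$ with $|\phi(w_0)|>1$ and $|\phi(rw_0)|<1$ for every $r\in(0,\rho]$.

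Set $M(r):=\max_{|z|=r}|\phi(z)|$. Since $\phi$ is not a multiple of an exponential function, it is in particular non-constant, hence unbounded by Liouville's theorem, so $M(r)\to\infty$ as $r\to\infty$. The maximum modulus principle makes $M$ continuous and non-decreasing, and in fact strictly increasing: any equality $M(r_1)=M(r_2)$ with $r_1<r_2$ would force $|\phi|$ to attain its maximum on $\overline{B(0,r_2)}$ at an interior point, contradicting non-constancy. Combined with $M(0)=|\phi(0)|<1$, this yields a unique $r^{*}>0$ such that $M(r^{*})=1$.

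The last step is a simple scaling. Pick any $r_1\in(r^{*},r^{*}/\rho)$, a nonempty interval because $\rho<1$, and choose $w_0$ with $|w_0|=r_1$ and $|\phi(w_0)|=M(r_1)>1$. For every $r\in(0,\rho]$ we have $|rw_0|=rr_1\le\rho r_1<r^{*}$, and therefore $|\phi(rw_0)|\le M(rr_1)<M(r^{*})=1$. This verifies hypothesis~(5) of Corollary~\ref{cor:smalleigengeneralbis}, which then delivers the hypercyclic algebra. No real obstacle arises here; the hypothesis that $\phi$ is not a multiple of an exponential function enters only through the weaker consequence that $\phi$ is non-constant, which powers both Liouville and the strict monotonicity of $M$.
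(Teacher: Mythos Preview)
Your proof is correct and follows essentially the same approach as the paper: both reduce to Corollary~\ref{cor:smalleigengeneralbis} by using the maximum modulus function $M(r)$, locating the radius where $M$ crosses $1$, and choosing $w_0$ on a slightly larger circle so that the scaled points $rw_0$ (for $r\le\rho$) lie inside the crossing radius. Your write-up is simply more explicit about why $M$ is strictly increasing and unbounded.
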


\begin{proof}
We prove that Assumption (5) of Corollary  \ref{cor:smalleigengeneralbis} is satisfied. 
Denote by $M(r)=\sup\{|\phi(z)|;\ |z|=r\}$ which is a continuous and increasing function of $r$ satisfying $M(0)<1$.
Let $r_0>0$ with $M(r_0)=1$ and let $r_1>r_0$ with $\rho r_1<r_0$. Any $w_0\in\CC$ such that $|w_0|=r_1$ and 
$|\phi(w_0)|=M(r_1)>1$ does the job.
\end{proof}

\begin{remark}
 An operator on a Banach space cannot satisfy the assumptions of Theorem \ref{thm:philarge}. Indeed, they imply that its spectrum is unbounded.
 We shall see later how it remains possible to get a hypercyclic algebra in a Banach algebra context.
\end{remark}

\subsection{Applications to convolution operators}

We now show how to deduce the first half of Theorem \ref{thm:mainconvolution} from Corollary \ref{cor:smalleigengeneral}. Thus, let $\phi$ be an entire function of exponential type which is not a multiple of an exponential function. Then we let $X=H(\CC)$ and $T=\phi(D)$.
The map $E$ is defined by $E(\lambda)(z)=\exp(\lambda z)$; it satisfies (1), (2) and (3) of Corollary \ref{cor:smalleigengeneral}.

Corollary \ref{cor:smalleigengeneralbis} may also be applied to functions satisfying $|\phi(0)|=1$. We give here two examples
which are not covered by Theorem \ref{thm:mainconvolution}.

\begin{example}(\cite{BCP18})
Let $\phi(z)=\cos(z)$. Then $\phi(D)$ supports a hypercyclic algebra.
\end{example}
\begin{proof}
 Recall that if $z=x+iy$, then $|\cos(z)|^2=\cos^2 x+\sinh^2 y$. Let us set $\psi(t)=\cos^2(2t)+\sinh^2(t)$. Using 
 standard calculus one may prove that there exists $t_0>0$ such that $\psi$ is decreasing on $(0,t_0)$ and increasing on $(t_0,+\infty)$.
 Let $t_1>0$ be such that $\psi(t_1)=1$. It then suffices to consider $w_0=(1+\eta)(2+i)t_1$ for some sufficiently small $\eta>0$. 
\end{proof}
 
\begin{example}
Let $\phi(z)=e^z-2$. Then $\phi(D)$ supports a hypercyclic algebra.
\end{example}
\begin{proof}
Let $t_1>0$ be  such that $\phi(t_1)=1$. It suffices to consider $w_0=(1+\eta)t_1$ for some sufficiently small $\eta>0$.
\end{proof}

We now give a surprizing example of an entire function $\phi$ with $|\phi(0)|>1$ and $\phi(D)$ supports a hypercyclic algebra.

\begin{example}\label{ex:philarge}
Let $\phi(z)=2e^{-z}+\sin z$. Then $\phi(D)$ supports a hypercyclic algebra. 
\end{example}
\begin{proof}
We just need to show that $\phi$ satisfies Assumption (5) of Theorem \ref{thm:philarge}. We let $a=k\pi$ for some sufficiently large $k$ and $b=k\pi+\frac{\pi}{2m}$. Then 
$$\left|\phi\left(db+(n-d)a\right)\right|=\left|\sin\left(nk\pi+\frac{d\pi}{2m}\right)+2\exp\left(-nk\pi-\frac{d\pi}{2m}\right)\right|.$$
Provided $k$ is large enough, this is less than $1$ as soon as $d<m$, whereas $|\phi(mb)|>1$.
\end{proof}

\subsection{Applications to composition operators}
In this subsection, we prove Theorem \ref{thm:maincomposition}. Recall that given a simply connected domain 
$\Omega\subset\CC$ and $\varphi$ a holomorphic self-map of $\Omega$, $C_\varphi$ is hypercyclic if and only if $\varphi$ is univalent and without fixed points. We first prove Theorem \ref{thm:maincomposition} when $\Omega=\CC$.
In that case $\varphi$ is also entire hence $\varphi$ is a translation $\varphi(z)=z+a$, $a\neq 0$. Thus $P(C_\varphi)=\phi(D)$, where $\phi(z)=P\circ \exp(az)$ and the result is a particular case of Theorem \ref{thm:mainconvolution}.

Otherwise, by the Riemann mapping theorem, we may assume that $\Omega=\DD$. We simplify the proof by using the linear fractional model (see for instance \cite{BoSh97}): since $\varphi$ has no fixed points in $\DD$, there exists a univalent map $\sigma:\DD\to\CC$ and a linear fractional map $\psi$ such that $\sigma\circ\varphi=\psi\circ\sigma$. Moreover
\begin{itemize}
\item either $\psi$ can be taken to be a dilation $\psi(z)=rz$ for some $0<r<1$;
\item or $\psi$ can be taken to be a translation $\psi(z)=z+a$ for some $a\in\CC\backslash\{0\}$ and $\sigma(\DD)\subset \{z;\ \Re e(z)>0\}$. 
\end{itemize}
The functional equation guarantees that $\sigma(\DD)=:\mathcal U$ is preserved by $\psi$ and that $P(C_\varphi)$ acting on $H(\DD)$ and $P(C_\psi)$ acting on $H(\mathcal U)$ are quasi-conjugate by $C_\sigma$.
Since $C_\sigma$ is a multiplicative map, it is sufficient to prove that $P(C_\psi)$ admits a hypercyclic algebra 
(see \cite[Remark 6]{BCP18}).

\noindent $\blacktriangleright$ The translation case. We denote $T=P(C_\psi)$ and $E(\lambda)(z)=\exp(\lambda z)$ so that $TE(\lambda)=\phi(\lambda)E(\lambda)$ with
$\phi(\lambda)=P(\exp(a\lambda))$. Then the assumptions of Corollary \ref{cor:smalleigengeneral} are satisfied provided we are able to prove that, for any $\Lambda\subset\CC$ with an accumulation point, 
$\textrm{span}\left(E(\lambda);\ \lambda\in\Lambda\right)$ is dense in $H(\mathcal U)$. The proof is exactly similar to that for $H(\CC)$, since the polynomials are dense in $H(\mathcal U)$ (recall that $\mathcal U$ is simply connected) - see for instance \cite{GoSh91}.

\noindent $\blacktriangleright$ The dilation case.  We still denote $T=P(C_\psi)$ but now we set $E(\lambda)=z^\lambda$. This defines a holomorphic function on $\mathcal U$ since $\mathcal U\subset\{z;\ \Re e(z)>0\}$. Moreover, $TE(\lambda)=\phi(\lambda)E(\lambda)$ with $\phi(\lambda)=P\left(\exp(\lambda\log r)\right)$. Again the assumptions of Corollary \ref{cor:smalleigengeneral}  are satisfied provided that, for all $\Lambda\subset\CC$ with an accumulation point, $\textrm{span}\left(E(\lambda);\ \lambda\in\Lambda\right)$ is dense in $H(\mathcal U)$. Let $L$ be a linear form on $H(\mathcal U)$ which vanishes on $\textrm{span}\left(E(\lambda);\ \lambda\in\Lambda\right)$. By the Riesz representation theorem, there exists $K$ a compact subset of $\mathcal U$ and $\mu$ a complex measure supported in $K$ such that, for all $f\in H(\mathcal U)$, 
$L(f)=\int_K fd\mu$. The map $\lambda\mapsto L(z^\lambda)$ is holomorphic and has an accumulation point
of zeros. Hence it is identically zero. Therefore, $L$ vanishes on each monomial $z^n$, hence on $H(\mathcal U)$ since $\mathcal U$ is simply connected. 
This shows that $\textrm{span}\left(E(\lambda);\ \lambda\in\Lambda\right)$ is dense in $H(\mathcal U)$.


\section{Large eigenvalues}\label{sec:largeeigenvalues}

As in Section \ref{sec:smalleigenvalues}, we shall deduce Part (2) of Theorem \ref{thm:mainconvolution} from a more general statement.

\begin{theorem}\label{thm:largeeigengeneral}
Let $X$ be an F-algebra and let $T\in\mathcal L(X)$. Assume that there exist a function $E:\CC\to X$ and
a nonconstant entire function $\phi:\CC\to \CC$ satisfying the following assumptions:
\begin{enumerate}
\item for all $\lambda\in\CC$,\ $TE(\lambda)=\phi(\lambda)E(\lambda)$;
\item for all $\lambda,\mu\in\CC$, $E(\lambda)E(\mu)=E(\lambda+\mu)$;
\item for all $\Lambda\subset\CC$ with an accumulation point, $\textrm{span}\left(E(\lambda);\ \lambda\in\Lambda\right)$ is dense in $X$;
\item $|\phi(0)|=1$, $\phi$ has subexponential growth and either $\phi'(0)\neq 0$ or $\phi$ has order less than $1/2$. 
\end{enumerate}
Then $T$ supports a hypercyclic algebra.
\end{theorem}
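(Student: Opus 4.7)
I apply Lemma \ref{lem:criterion}: given non-empty open sets $U,V\subset X$, a $0$-neighbourhood $W$, and an integer $m\geq 1$, I seek $u\in U$ and $N\in\NN$ with $T^N(u^n)\in W$ for $1\le n<m$ and $T^N(u^m)\in V$. Following the construction of Theorem \ref{thm:philarge}, I take
\[
u = u_0 + \sum_{j=1}^{q} c_j(N)\,E(\mu_j/m),
\]
where $u_0 = \sum_l a_l E(\gamma_l)$ approximates a target in $U$, where $\sum_j b_j E(\mu_j)$ approximates a target in $V$, and where $c_j(N)^m\phi(\mu_j)^N = b_j$. The pure-diagonal contribution to $T^N(u^m)$ reproduces $\sum_j b_j E(\mu_j)$, while $|c_j(N)|\to 0$ ensures $u\in U$ for $N$ large. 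Every other cross-term is of size $|c_{\mathbf{j}}|\,|\phi(\xi)|^N$ with $\xi=(\mu_{j_1}+\cdots+\mu_{j_d})/m+\gamma_{l_1}+\cdots+\gamma_{l_{n-d}}$, and will tend to zero provided
\[
|\phi(\xi)|^m < \prod_{i=1}^{d} |\phi(\mu_{j_i})|.
\]

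The difficulty compared to Theorem \ref{thm:philarge} is that since $|\phi(0)|=1$, the $\gamma_l$'s cannot be placed indiscriminately near $0$. Still, $\phi$ being nonconstant forces the open set $\Omega=\{|\phi|<1\}$ to be non-empty (either $\phi$ has a zero, or $1/\phi$ would be a bounded entire function), and inspecting the Taylor expansion of $\phi$ at $0$ (using the first non-vanishing derivative $\phi^{(k)}(0)$ with $k\ge 1$) shows that $\Omega$ contains an open sector with vertex at $0$. I pick a point $\gamma^{\ast}$ in such a sector close to $0$, and choose the $\gamma_l$'s in a small accumulation-point subset of the segment from $0$ to $\gamma^{\ast}/m$, so that all sums $\gamma_{l_1}+\cdots+\gamma_{l_k}$ for $k\le m$ remain inside the sector and hence in $\Omega$. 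This disposes of the $d=0$ cross-terms. The non-pure-diagonal $d=m$ cross-terms are dispatched exactly as in Theorem \ref{thm:philarge} through the strict convexity of $\log|\phi|$ (Lemma \ref{lem:convexity}) along a curve on which the $\mu_j$'s are placed.

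The essential new step is the construction of the $\mu_j$'s so that the cross-term inequality holds for all intermediate $d\in\{1,\dots,m-1\}$. Here the two sub-cases of hypothesis (4) diverge. In the \emph{$\phi'(0)\neq 0$} sub-case, the linear approximation $|\phi(z)|\approx 1+\Re(\phi'(0)\overline{\phi(0)}\,z)$ near $0$ identifies a half-plane where $|\phi|<1$, a complementary direction where $|\phi|>1$, and -- by non-constancy together with subexponential growth -- allows one to exhibit $a,b\in\CC$ realising hypothesis~(5) of Theorem \ref{thm:philarge}, from which the conclusion follows directly. In the \emph{order $<1/2$} sub-case, I invoke Wiman's cos-$\pi\rho$ theorem to obtain a sequence of radii $r_n\to\infty$ with $\min_{|z|=r_n}|\phi(z)|\ge M(r_n)^{\cos\pi\rho-\varepsilon}$ and $\cos\pi\rho>0$; placing the $\mu_j$'s on a short arc of such a circle and using the asymptotic $\log M((d/m)r)\sim (d/m)^\rho\log M(r)$ together with the subexponential growth controls the cross-terms. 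The hardest part of the plan, and the main obstacle I foresee, is the cross-term estimate in the order $<1/2$ case for $d$ close to $m$: the naive bound $(d/m)^\rho$ versus $(d/m)\cos\pi\rho$ does not immediately close the required inequality, so it will likely be necessary to refine the placement of the $\mu_j$'s -- perhaps along a well-chosen level curve or asymptotic tract of $|\phi|$ -- to make the estimate go through for all $d$ simultaneously.
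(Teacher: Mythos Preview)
Your plan follows the ``small eigenvalue'' template of Theorem~\ref{thm:philarge}: you place $E(\mu_j/m)$ in $u$ and aim for the diagonal $d=m$ term to dominate. The paper does something genuinely different. It places $E(\lambda_j)$ (not divided by $m$) in $u$, so that the dominant term in $u^m$ is the $d=1$ contribution $m a_1^{m-1}\sum_j c_j E(\lambda_j+(m-1)\gamma_1)$, with $c_j$ chosen so that $T^N$ of this equals $\sum_j b_j E(\lambda_j+(m-1)\gamma_1)$. The required cross-term inequality then becomes
\[
\bigl|\phi(\lambda_{j_1}+\cdots+\lambda_{j_d}+s\gamma_1+z)\bigr|^{1/d}<\bigl|\phi(\lambda+(m-1)\gamma_1)\bigr|,
\]
which for $\lambda_j$ near $w_0$ reduces to $|\phi(dw_0)|^{1/d}<|\phi(w_0)|$. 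This is supplied by Lemma~\ref{lem:order}, whose proof is a self-contained contradiction argument: if no such $w_0$ exists, one builds a sequence $t_n\to\infty$ with $\log|\phi(t_n)|\ge \kappa\,t_n\log|\phi(t_0)|/t_0$, violating subexponential growth. That lemma is the heart of the matter and treats both sub-cases of hypothesis~(4) uniformly once $z_0,z_1$ are found.

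Your plan, by contrast, has a real gap in each sub-case. In the $\phi'(0)\neq 0$ case you assert that one can realise hypothesis~(5) of Theorem~\ref{thm:philarge}, but this amounts to finding, for each $m$, a ray along which $|\phi|<1$ on $(0,(m-1)b]$ and $|\phi(mb)|>1$; subexponential growth alone does not give this (the function could oscillate, or stay below $1$ along the initial ray indefinitely), and producing such a $b$ would require an argument of the same flavour as Lemma~\ref{lem:order}. In the order $<1/2$ case you yourself note that the Wiman-type bound $(d/m)^\rho$ versus $(d/m)\cos\pi\rho$ does not close the inequality for $d$ near $m$, and you leave the resolution open. The paper's switch to the $d=1$ dominant term is precisely what makes the subexponential hypothesis bite cleanly: one needs $|\phi(w_0)|>|\phi(dw_0)|^{1/d}$, i.e.\ $\log|\phi(dw_0)|<d\log|\phi(w_0)|$, and subexponential growth forces this to hold for \emph{some} large $w_0$ by the iterative argument of Lemma~\ref{lem:order}.
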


The proof of Theorem \ref{thm:largeeigengeneral} shares many similarities with that of Theorem \ref{thm:philarge}.
Nevertheless, we will now choose the complex numbers $\lambda_j$ with $\left|\phi\left(\lambda_j\right)\right|$ very large
(instead of being slightly bigger than 1). In this way, because $\phi$ has subexponential growth, we will ensure that $|\phi(\lambda_j)|$ is bigger than $|\phi(2\lambda_j)|^{1/2}$.
Thus, when we will take the powers of $u$ and apply $T^N$, the main term will change. We will also need a more careful
interaction between the $\lambda_j$' and the $\gamma_k$'. This is the content of the following key lemma, which uses the fact that we control the growth of $\phi$.

\begin{lemma}\label{lem:order}
 Let $\phi$ be a nonconstant entire function with subexponential growth and $|\phi(0)|=1$. Assume that either $\phi'(0)\neq 0$ or $\phi$ has order less than $1/2$. 
 Then for all $m\geq 2$, there exist $z_0\in\CC\backslash\{0\}$ and $w_0= \rho z_0$ for some $\rho>0$ such that  
 \begin{itemize}
 \item $|\phi|<1$ on $(0,z_0]$;
  \item $|\phi(w_0)|>1$;
  \item $|\phi(w_0)|>|\phi(dw_0)|^{1/d}$ for all $d=2,\dots,m$.
  \item $t\mapsto |\phi(w_0+tz_0)|$ is increasing on some interval $[0,\eta)$, $\eta>0$. 
 \end{itemize}
\end{lemma}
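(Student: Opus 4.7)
The plan is to find the required triple on a single ray $\Gamma_\theta=\RR_+\,e^{i\theta}$: setting $z_0=r_1 e^{i\theta}$, $w_0=r_2 e^{i\theta}$ with $\rho=r_2/r_1$, the points $dw_0$ and $w_0+tz_0=(r_2+tr_1)e^{i\theta}$ lie on the same ray, so the whole lemma becomes a statement about the real-analytic function $F(r)=\log|\phi(re^{i\theta})|$. I must find $\theta$ and $0<r_1<r_2$ with $F<0$ on $(0,r_1]$, $F(r_2)>0$, $F$ locally increasing at $r_2$, and $dF(r_2)>F(dr_2)$ for $d=2,\dots,m$.

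First I would select the direction. Writing $\phi/\phi(0)=1+a_{k_0}z^{k_0}+o(z^{k_0})$ with $a_{k_0}\neq 0$ (so $k_0=1$ under the hypothesis $\phi'(0)\neq 0$), the expansion
$$F(re^{i\theta})=r^{k_0}\,\mathrm{Re}(a_{k_0}e^{ik_0\theta})+O(r^{k_0+1})$$
exhibits an open arc of directions along which $F<0$ on some initial interval $(0,\eta)$. I next claim that on every ray from the origin, $F$ tends to $+\infty$ along a subsequence. In the order$<1/2$ case this is Wiman's $\cos\pi\rho$ theorem, which yields $\min_{|z|=r_n}|\phi|\to\infty$ on a sequence $r_n\to\infty$; in the subexponential case it follows from Phragm\'en--Lindel\"of, since an entire function of exponential type~$0$ bounded on a line through the origin is bounded in each half-plane, hence bounded on $\CC$, hence constant by Liouville. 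Fixing such a $\theta$, set $r^*=\inf\{r>0:F(r)\geq 0\}$, which is then positive and finite; pick $r_1<r^*$ close enough that $F(r_1)<0$ and take $z_0=r_1e^{i\theta}$. This delivers $(a)$.

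I would then choose $r_2>r^*$ with $F(r_2)>0$, with $F$ locally strictly increasing at $r_2$ (achievable since $F$ is real-analytic and unbounded above), and with $r_2$ as large as required. Conditions $(b)$ and $(d)$ are immediate: $(d)$ is just the local increase of $F$ at $r_2$ read off the parametrisation $t\mapsto F(r_2+tr_1)$. The heart of the proof is condition $(c)$, $dF(r_2)>F(dr_2)$ for $d=2,\dots,m$. For the upper bound I would use the trivial $F(dr_2)\leq\log M(dr_2)$. For the lower bound I would pick $r_2$ along a subsequence on which $F(r_2)$ captures a positive fraction of $\log M(r_2)$: in the order$<1/2$ case the $\cos\pi\rho$ theorem directly supplies $F(r_n)\geq \cos(\pi\rho)\log M(r_n)$, and in the subexponential case an analogous minimum-modulus-type estimate holds along the chosen ray. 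I would then combine this with the fact that subexponential growth together with the convexity of $\log M$ in $\log r$ forces $\log M(dr)<d\log M(r)$ for all $r$ large enough.

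The principal obstacle I anticipate is precisely this growth comparison $\log M(dr)<d\log M(r)$, coupled with the need for a matching lower bound $F(r_2)\gtrsim \log M(r_2)$ along the chosen ray. These two ingredients together pinpoint why ``subexponential growth'' is exactly the right hypothesis: the upper-bound side fails for exponential type $>0$, while the lower-bound side is what forces the auxiliary alternative ``$\phi'(0)\neq 0$ or order less than $1/2$'', the latter being what guarantees (via $\cos\pi\rho$) that $F$ is not too small relative to $\log M$ on the ray.
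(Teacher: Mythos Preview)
Your setup on a single ray and your construction of $z_0$ match the paper's. The gap is in condition~(c), which is the heart of the lemma. Your route via the maximum modulus $M(r)$ does not close. In the order $<1/2$ case, the $\cos\pi\rho$ theorem gives $F(r_n)\geq \cos\pi\rho\,\log M(r_n)$, so your chain of inequalities requires
\[
d\cos\pi\rho\,\log M(r_n)>\log M(d r_n).
\]
But for order $\rho$ with regular growth, $\log M(dr)/\log M(r)\sim d^\rho$, and the inequality $d^\rho<d\cos\pi\rho$ is \emph{false} for $\rho$ near $1/2$ (e.g.\ $\rho=0.4$, $d=2$: $2^{0.4}\approx 1.32$ while $2\cos(0.4\pi)\approx 0.62$). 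So your argument collapses on a large portion of the range you need to cover. In the ``$\phi'(0)\neq 0$, subexponential'' branch the situation is worse: the ``analogous minimum-modulus-type estimate along the chosen ray'' you invoke simply does not exist in the generality required---no theorem bounds $\log|\phi(re^{i\theta})|$ below by a fixed positive multiple of $\log M(r)$ for an arbitrary type-$0$ function of order close to~$1$.

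The paper bypasses all of this by never comparing to $M(r)$. After fixing the ray (so that $\psi(t):=|\phi(t z_0)|^2$ satisfies $\psi(t)<1$ on $(0,1]$ and $\psi(t_0)>1$, $\psi'(t_0)>0$ for some $t_0$), it argues by contradiction: if no $w_0$ on the ray satisfies the last three bullets, then from every admissible $t_n$ one extracts $t_{n+1}\in(t_n,mt_n]$ with $\psi'(t_{n+1})>0$ and $\psi(t_{n+1})\geq \psi(t_n)^{(1-\varepsilon_n)r_n}$, $r_n=t_{n+1}/t_n$. Iterating yields $\psi(t_n)\geq \psi(t_0)^{\kappa t_n/t_0}$ along $t_n\to\infty$, which directly contradicts subexponential growth. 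The point is that the failure of (c) at $t_n$ \emph{itself} provides the exponential amplification, so no external comparison with $M(r)$ or minimum-modulus input is needed. This self-referential bootstrapping is the idea you are missing.
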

\begin{proof}
We first show the existence of $z_0,z_1\in\CC$ with $z_0\in (0,z_1)$, $|\phi|<1$ on $(0,z_0]$ and $|\phi(z_1)|>1$.
The proof differs here following the assumptions made on $\phi$. Assume first that $\phi$ has order less than $1/2$. 
Write $\phi(z)=e^{i\theta_0}+\rho_p e^{i\theta_p}z^p+o(z^p)$ with $\rho_p>0$. Then, $\phi\left(te^{-i(\theta_p-\theta_0+\pi)/p}\right)=e^{i\theta_0}-\rho_p t^pe^{i\theta_0}+o(t^p)$
 has modulus less than $1$ provided $t$ is small enough. We then set $z_0=te^{-i(\theta_p-\theta_0+\pi)/p}$ for some small $t$. We then find $z_1$ since
 any nonconstant entire function of order less than 1/2 cannot be bounded
on a half-line (see \cite[Theorem 3.1.5]{Boas54}).

On the other hand, suppose now that $\phi'(0)\neq 0$. Without loss of generality we may assume that $\phi(z)=1-az+o(z)$ for some $a>0$. 
Now, a nonconstant entire function with subexponential growth cannot be bounded on a half-plane (see \cite[Theorem 1.4.3]{Boas54}).
Thus, there exists $z_1=r_1e^{i\alpha_1}$ with $r_1>0$ and $\alpha_1\in (-\pi/2,\pi/2)$ such that $|\phi(z_1)|>1$. It is easy to check that, for $t>0$ small enough,
$|\phi(tz_1)|<1$ and we set $z_0=tz_1$ for such a small $t>0$. 

We now proceed with the construction of $w_0$. 
 Without loss of generality we may assume $z_0=1$. We set $\psi(t)=|\phi(t)|^2$. We proceed by contradiction and we assume
 that there does not exist $w_0>0$ such that the last three points of the lemma are satisfied. 
 We fix a sequence $(\veps_n)$ in $(0,1)$ such that $\kappa:=\prod_{n=1}^{+\infty}(1-\veps_n)>0$. 
We shall construct two sequences $(t_n)_{n\geq 0}$ and $(r_n)_{n\geq 1}$ 
 of positive real numbers such that, for all $n\geq 0$, 
 $$\left\{ 
 \begin{array}{l}
 \psi(t_n)>1,\ \psi'(t_n)>0\\
 t_n=r_{n}t_{n-1}\\
 r_n\in (1,m]\\
 \psi(t_n)\geq \max\left(\psi(t_{n-1})^{(1-\veps_n)r_n},\psi(t_{n-1})^{3/2}\right).
 \end{array}
 \right.$$
First, the existence of $z_1$ leads to some positive real number $t_0>1$ such that 
$\psi(t_0)>1$ and $\psi'(t_0)>0$. Next, assume that the construction has been done until step $n$ and let us proceed with step $n+1$.
Since we assumed that the conclusion of Lemma \ref{lem:order} is false, there exists $k_{n+1}\in \{2,\dots,m\}$ such that 
$$\psi(k_{n+1}t_n)\geq \psi(t_n)^{k_{n+1}}>\max\left(\psi(t_n)^{(1-\veps_{n+1})k_{n+1}},\psi(t_n)^{3/2}\right).$$
If $\psi'(k_{n+1}t_n)>0$, then we are done by choosing $t_{n+1}=k_{n+1}t_n$. Otherwise, let
$$\tau:=\sup\{t\in [t_n,k_{n+1}t_n];\ \psi'(t)>0\}.$$
Then
\[\psi(\tau)\geq \psi(k_{n+1}t_n).\]
Thus, there exists $t_{n+1}\in[t_n,\tau]$ such that $\psi'(t_{n+1})>0$ and
\[\psi(t_{n+1})>\max\left(\psi(t_n)^{(1-\veps_{n+1})k_{n+1}},\psi(t_n)^{3/2}\right).\]
Now, $t_{n+1}=r_{n+1}t_n$ for some $r_{n+1}\in (1,k_{n+1})$ so that
\[\psi(t_{n+1})>\max\left(\psi(t_n)^{(1-\veps_{n+1})r_{n+1}},\psi(t_n)^{3/2}\right)\]
as required to prove step $n+1$. The sequence $(t_n)$ we have just built satisfies, for all $n\leq N$,
\[\psi(t_n)\geq \left(\psi(t_0)\right)^{(3/2)^n}.\]
In particular, $\left(\psi\left(t_n\right)\right)$, hence $\left(t_n\right)$, go to infinity. Now
$$\psi(t_n)\geq \psi(t_0)^{\prod_{k=1}^n (1-\veps_k)r_k}\geq \psi(t_0)^{\kappa t_n/t_0}.$$

This is a contradiction since $\psi$ has subexponential growth.
\end{proof}

\begin{proof}[Proof of Theorem \ref{thm:largeeigengeneral}]
Let $w_0,z_0\neq 0$ be given by Lemma \ref{lem:order}.
 Then we may find $\gamma_1\in (0,z_0/m)$ which is sufficiently close to 0 so that
 \begin{itemize}
  \item $\left|\phi\left(w_0+(m-1)\gamma_1\right)\right|>1$;
  \item $\left|\phi\left(w_0+(m-1)\gamma_1\right)\right|>\left|\phi\left(dw_0+s\gamma_1\right)\right|^{1/d}$ for all $d\in\{2,\dots,m\}$ and for all $s\in \{0,\dots,m-d\}$;
  \item $\left|\phi\left(w_0+(m-1)\gamma_1\right)\right|>\left|\phi\left(w_0+s\gamma_1\right)\right|$ for all $s\in\{0,\dots,m-2\}$.
 \end{itemize}
 We then fix $\delta>0$ sufficiently small so that
\begin{itemize}
 \item for all $\lambda\in B(w_0,\delta)$, $\left|\phi\left(\lambda+(m-1)\gamma_1\right)\right|>1$;
 \item for all $d\in\{1,\dots,m\}$, for all $s\in\{0,\dots,m-d\}$ with $(d,s)\neq (1,m-1)$, for all $\lambda,\lambda_1,\dots\lambda_d\in B(w_0,\delta)$,
 for all $z\in B(0,\delta)$, 
 $$\left|\phi\left(\lambda+(m-1)\gamma_1\right)\right|>\left|\phi\left(\lambda_1+\cdots+\lambda_d+s\gamma_1+z\right)\right|^{1/d}.$$
\end{itemize}
Let $p,q$ be integers, let $a_1,\dots,a_p,b_1,\dots,b_q$ be complex numbers, let $\gamma_2,\dots,\gamma_p\in (0,z_0/m)\cap B(0,\delta/m)$ and let
$\lambda_1,\dots,\lambda_q\in B(w_0,\delta)$ be such that
$$\sum_{l=1}^p a_l E\left(\gamma_l \right)\in U$$
$$\sum_{j=1}^q b_j E\left(\lambda_j+(m-1)\gamma_1\right) \in V.$$
Without loss of generality we may assume $a_1\neq 0$. For $N\geq 1$ and $j\in\{1,\dots,q\}$, let $c_j:=c_j(N)$ be defined by
$$c_j=\frac{b_j}{ma_1^{m-1}\big(\phi\left(\lambda_j+(m-1)\gamma_1\right)\big)^N}$$
and let us set
$$u:=u_N=\sum_{l=1}^p a_l E\left(\gamma_l \right)+\sum_{j=1}^q c_j E\left(\lambda_j \right)$$
(observe that now we do not divide $\lambda_j$ by $m$).
As before, for $N$ large enough, $u$ belongs to $U$. Moreover, the formula for $u^n$ is similar:
$$u^n=\sum_{d=0}^n \sum_{\substack{\bl\in I_p^{n-d} \\\bj\in I_q^d}}\alpha(\bl,\bj,d,n)a_\bl c_\bj E\left(\gamma_{l_1}+\cdots+\gamma_{l_{n-d}}+\lambda_{j_1}+\cdots+\lambda_{j_d}\right).$$
Assume first that $n<m$ and let us show that, for all $d\in\{0,\dots,n\}$, for all $\bl\in I_p^{n-d}$ and all $\bj\in I_q^d$,
\begin{equation}\label{eq:main2} 
\left|c_\bj(N)\right|\times\left|\phi\left(\gamma_{l_1}+\cdots+\gamma_{l_{n-d}}+\lambda_{j_1}+\cdots+\lambda_{j_d}\right)\right|^N\longrightarrow_{N\to+\infty}0.
\end{equation}

Assume first $d\geq 1$ and let $s=\card\left\{i;\ l_i=1\right\}$. Then $\gamma_{l_1}+\cdots+\gamma_{l_{n-d}}+\lambda_{j_1}+\cdots+\lambda_{j_d}=\lambda_{j_1}+\cdots+\lambda_{j_d}+s\gamma_1+z$
with $|z|<\delta$ and $s\leq m-2$. So, writing
\begin{align*}
 &\left|c_\bj(N)\right|\times\left|\phi\left(\gamma_{l_1}+\cdots+\gamma_{l_{n-d}}+\lambda_{j_1}+\cdots+\lambda_{j_d}\right)\right|^N\\
&\quad =\frac{|b_\bj|}{m^d|a_1|^{(m-1)d}}\left(\prod_{i=1}^d \frac{\left|\phi\left(\lambda_{j_1}+\cdots+\lambda_{j_d}+s\gamma_1+z\right)\right|^{1/d}}{\left|\phi\left(\lambda_{j_i}+(m-1)\gamma_1\right)\right|}\right)^N
\end{align*}
we observe that \eqref{eq:main2} is true. If $d=0$ (in that case, $c_\bj(N)=1$), \eqref{eq:main2} remains also true since $\gamma_{l_1}+\cdots+\gamma_{l_n}\in (0,z_0]$ so that 
$\left|\phi\left(\gamma_{l_1}+\cdots+\gamma_{l_n}\right)\right|<1$. This yields that $T^N (u^n)$ tends to zero. The case $n=m$ requires small modifications. 
We now decompose $u^m$ into 
\begin{align*}
 u^m&=ma_1^{m-1}\sum_{j=1}^q c_j E\left(\lambda_j+(m-1)\gamma_1\right)\\
 &\quad+\sum_{\substack{d=0\\d\neq 1}}^m \sum_{\substack{\bl\in I_p^{m-d} \\\bj\in I_q^d}}\alpha(\bl,\bj,d,m)a_\bl c_\bj E\left(\gamma_{l_1}+\cdots+\gamma_{l_{n-d}}+\lambda_{j_1}+\cdots+\lambda_{j_d}\right)\\
 &\quad+\sum_{\substack{\bl\in I_p^{m-1} \\\bj\in I_q^1\\ \bl\neq (1,\dots,1)}}\alpha(\bl,\bj,d,m)a_\bl c_\bj E\left(\gamma_{l_1}+\cdots+\gamma_{l_{n-d}}+\lambda_{j_1}+\cdots+\lambda_{j_d}\right)\\
 &=:v_1+v_2+v_3.
\end{align*}
With exactly the same argument as above, one shows that $T^N(v_2+v_3)$ tends to zero. Furthermore,
$$T^N(v_1)=\sum_{j=1}^m b_j E\left(\lambda_j+(m-1)\gamma_1\right)\in V,$$
which closes the argument.
\end{proof}

\section{Unimodular eigenvalues}\label{sec:unimodular}
\subsection{Proof of Theorem \ref{thm:mainbackward}}
In this section, we provide a proof for Theorem \ref{thm:mainbackward}. As before, $P(B)$ admits a natural family of eigenvectors: 
for any $\lambda\in\DD$, $(\lambda^k)$ is an eigenvector of $P(B)$ associated to $P(\lambda)$. Nevertheless, we do not have a so simple formula for the product of two eigenvectors. 

\begin{lemma}\label{lem:producteigenvectors}
Let $\Theta=(\theta_1,\dots,\theta_n)\in\CC^n$ and let $\mu_1,\dots,\mu_r$ be pairwise distinct complex numbers such that $\{\theta_1,\dots,\theta_n\}=\{\mu_1,\dots,\mu_r\}$. 
For $j=1,\dots,r$, let $\kappa_j=\card\{k;\ \theta_k=\mu_j\}$. Then there exist polynomials $P_{\Theta,j}$, $1\leq j\leq r$, with degree less than or equal to $\kappa_j-1$ such  that
$$\left(\theta_1^k\right)\star\cdots\star\left(\theta_n^k\right)=\sum_{j=1}^r \left(P_{\Theta,j}(k)\mu_j^k\right).$$
When all the $\theta_i$ are equal to the same $\lambda$, then 
$$\left(\lambda^k\right)\star\cdots\star\left(\lambda^k\right)=\left(P_n(k)\lambda^k\right)$$
where $\deg(P_n)=n-1$.
\end{lemma}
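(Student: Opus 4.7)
The plan is to use the $z$-transform (equivalently, the formal power series representation). For a sequence $u=(u_k)\in\ell^1(\NN)$, set $\widehat u(z)=\sum_{k\geq 0}u_kz^k$. Convolution in $\ell^1(\NN)$ corresponds to pointwise multiplication of generating functions, and $\widehat{(\lambda^k)}(z)=\frac{1}{1-\lambda z}$. Therefore
$$\widehat{(\theta_1^k)\star\cdots\star(\theta_n^k)}(z)=\prod_{i=1}^n\frac{1}{1-\theta_iz}=\prod_{j=1}^r\frac{1}{(1-\mu_jz)^{\kappa_j}}.$$

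Next I would apply partial fraction decomposition in the variable $z$ to write
$$\prod_{j=1}^r\frac{1}{(1-\mu_jz)^{\kappa_j}}=\sum_{j=1}^r\sum_{l=1}^{\kappa_j}\frac{A_{j,l}}{(1-\mu_jz)^l}$$
for suitable coefficients $A_{j,l}\in\CC$, and expand each summand using the identity $\frac{1}{(1-\mu z)^l}=\sum_{k\geq 0}\binom{k+l-1}{l-1}\mu^kz^k$. Since $k\mapsto\binom{k+l-1}{l-1}$ is a polynomial of degree exactly $l-1$, the coefficient of $z^k$ in the product is $\sum_{j=1}^rP_{\Theta,j}(k)\mu_j^k$ with
$$P_{\Theta,j}(k)=\sum_{l=1}^{\kappa_j}A_{j,l}\binom{k+l-1}{l-1},$$
which is a polynomial in $k$ of degree at most $\kappa_j-1$. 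This gives the claimed identity.

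For the special case where every $\theta_i$ equals $\lambda$, there is only one distinct value, so the generating function reduces to $\frac{1}{(1-\lambda z)^n}$ and the $k$-th coefficient is $\binom{k+n-1}{n-1}\lambda^k$. The binomial coefficient is a polynomial in $k$ of degree exactly $n-1$ (with leading coefficient $1/(n-1)!$), so $P_n$ has degree $n-1$ as claimed.

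There is no real obstacle in this argument; once one recognizes that convolution becomes multiplication and that a factor $(1-\mu_jz)^{-\kappa_j}$ in a rational function produces polynomials of degree up to $\kappa_j-1$ in the partial fraction expansion, the conclusion is forced. The only care required is the routine book-keeping needed to match the degree bound to the multiplicity $\kappa_j$. An alternative, purely inductive proof on $n$ is available (computing $(p(k)\mu^k)\star(\theta^k)$ directly and splitting into the cases $\mu=\theta$ and $\mu\neq\theta$), but the generating function route is cleaner.
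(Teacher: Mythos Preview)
Your proof is correct and takes a genuinely different route from the paper. The paper proceeds exactly by the inductive alternative you mention at the end: it first establishes two facts,
\begin{itemize}
\item $(k^d\lambda^k)\star(\lambda^k)=(Q_d(k)\lambda^k)$ with $\deg Q_d=d+1$, using that $k\mapsto\sum_{j=0}^k j^d$ is a polynomial of degree $d+1$;
\item for $\lambda\neq\mu$, $(k^d\lambda^k)\star(\mu^k)=(Q(k)\lambda^k)+B(\mu^k)$ with $\deg Q\leq d$, proved by induction on $d$;
\end{itemize}
and then deduces the lemma by induction on $n$. Your generating-function argument is shorter and yields the explicit formula $P_n(k)=\binom{k+n-1}{n-1}$ in the equal-values case for free, whereas the paper's proof only asserts existence. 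The paper's route, on the other hand, stays entirely at the level of sequences and elementary summation identities, which is perhaps why it was chosen: the Facts~1 and~2 it isolates are reused implicitly in the analysis of $P(B)^N(k^d\lambda^k)$ in the next lemma. Either approach is perfectly adequate here.
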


The statement of this lemma motivates the study of the effect of $P(B)^N$ on the vectors $(k^d\lambda^k)$.
\begin{lemma}\label{lem:effect}
Let $P\in\CC[X]$ and let $d\geq 0$. Let also $\lambda\in\DD$ with $\lambda P(\lambda)P'(\lambda)\neq 0$. There exist complex numbers $(A_{d,N,s})_{N\geq 0,\ 0\leq s\leq d}$, such that, for all $N\geq 0$, 
\begin{equation}
\big(P(B)\big)^N\left(k^d\lambda^k\right)=\sum_{s=0}^d P(\lambda)^{N+s-d}A_{d,N,s}\left(k^s\lambda^k\right)\label{eq:effect}
\end{equation}
with $A_{d,N,s}\sim_{N\to+\infty}\omega_{d,s}N^{d-s}$ for some nonzero $\omega_{d,s}$.
\end{lemma}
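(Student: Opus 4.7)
The strategy is a direct matrix computation once one observes that $P(B)$ leaves invariant the $(d+1)$-dimensional subspace $V_d:=\textrm{span}\{e_s:0\le s\le d\}$, where $e_s:=(k^s\lambda^k)$. Setting $Q_n(\lambda):=\sum_j p_j j^n\lambda^j$ (so $Q_0=P$ and $Q_1(\lambda)=\lambda P'(\lambda)$), the identity $B^j(k^d\lambda^k)=((k+j)^d\lambda^{k+j})$ together with the binomial expansion of $(k+j)^d$ gives
\[
P(B)\,e_t=\sum_{s=0}^t\binom{t}{s}Q_{t-s}(\lambda)\,e_s, \qquad 0\le t\le d.
\]
Hence the matrix $A$ of $P(B)|_{V_d}$ in the basis $(e_0,\dots,e_d)$ is upper triangular with diagonal entries all equal to $P(\lambda)$ and superdiagonal entries equal to $(s+1)\lambda P'(\lambda)$.

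Writing $A=P(\lambda)I+M$ with $M$ strictly upper triangular (so $M^{d+1}=0$), the binomial formula (valid since $I$ and $M$ commute) gives
\[
A^N=\sum_{k=0}^{d}\binom{N}{k}P(\lambda)^{N-k}M^k.
\]
Reading off the $(s,d)$ entry and factoring out $P(\lambda)^{N+s-d}$ (well defined because $P(\lambda)\neq 0$) defines
\[
A_{d,N,s}=\sum_{k=0}^{d-s}\binom{N}{k}P(\lambda)^{d-s-k}(M^k)_{s,d},
\]
which is precisely the coefficient required in \eqref{eq:effect}.

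The only non-routine step is extracting the leading asymptotic of $A_{d,N,s}$, and this is where the hypothesis $\lambda P'(\lambda)\neq 0$ enters. Since $\binom{N}{k}\sim N^k/k!$, the dominant contribution comes from the maximal index $k=d-s$. For this value, $(M^{d-s})_{s,d}$ is a sum over strictly increasing sequences $s=s_0<s_1<\cdots<s_{d-s}=d$ in $\{0,\dots,d\}$; there is only one such sequence, $s_i=s+i$, and along it one multiplies only superdiagonal entries, so
\[
(M^{d-s})_{s,d}=\prod_{i=0}^{d-s-1}(s+i+1)\lambda P'(\lambda)=\frac{d!}{s!}\big(\lambda P'(\lambda)\big)^{d-s}.
\]
Combined with $\binom{N}{d-s}\sim N^{d-s}/(d-s)!$, this produces $A_{d,N,s}\sim\omega_{d,s}N^{d-s}$ with $\omega_{d,s}=\binom{d}{s}(\lambda P'(\lambda))^{d-s}$. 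The main obstacle is thus not the existence of an equivalent, which is automatic from the polynomial dependence on $N$, but the identification of the leading coefficient and the verification that it is nonzero; both are supplied by the uniqueness of the longest path in $M$ and by the standing hypothesis $\lambda P'(\lambda)\neq 0$.
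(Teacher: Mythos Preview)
Your proof is correct and takes a genuinely different route from the paper's. The paper first isolates the case $N=1$ in an auxiliary lemma (obtaining $Q_{d,d}(\lambda)=P(\lambda)$ and $Q_{d,d-1}(\lambda)=d\lambda P'(\lambda)$), then proves \eqref{eq:effect} by induction on $N$, extracting a recurrence $A_{d,N+1,s}=\sum_{r=s}^d P(\lambda)^{r-s-1}A_{d,N,r}Q_{r,s}(\lambda)$, and finally establishes the asymptotic $A_{d,N,s}\sim\omega_{d,s}N^{d-s}$ by a downward induction on $s$ via summation of that recurrence. You instead package the single-step action as an upper-triangular matrix $A=P(\lambda)I+M$ on the invariant subspace $V_d$, invoke the nilpotent binomial expansion $A^N=\sum_{k\le d}\binom{N}{k}P(\lambda)^{N-k}M^k$, and read off each $A_{d,N,s}$ as a polynomial in $N$ whose top coefficient is governed by the unique maximal-length path in $M$. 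What this buys you is a closed-form identification $\omega_{d,s}=\binom{d}{s}\big(\lambda P'(\lambda)\big)^{d-s}$, making the nonvanishing transparent; the paper's recursive argument never computes $\omega_{d,s}$ explicitly and handles the nonvanishing only through the inductive cascade. Both approaches rest on the same single-step computation of $P(B)e_t$, but your linear-algebraic packaging is cleaner and more informative.
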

We point out that in the statement of the previous lemma, the complex number $\omega_{d,s}$ may depend on $\lambda$; later we will sometimes denote them $\omega_{d,s}(\lambda)$.

We will also need a density lemma.

\begin{lemma}\label{lem:density}
Let $\Lambda\subset\DD$ with an accumulation point inside $\DD$. Then $\left\{\left(\lambda^k\right);\ \lambda\in\Lambda\right\}$ spans a dense subspace of  $\ell^1(\NN)$. 
\end{lemma}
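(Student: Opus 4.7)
The plan is to prove density via the Hahn--Banach theorem: it suffices to show that any continuous linear functional on $\ell^1(\NN)$ which vanishes on each $(\lambda^k)_{k\geq 0}$ with $\lambda\in\Lambda$ must be identically zero.

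To carry this out, I would use the standard identification $(\ell^1(\NN))^*\cong\ell^\infty(\NN)$: such a functional $L$ is given by some bounded sequence $(c_k)_{k\geq 0}$ via $L(u)=\sum_{k\geq 0} c_k u_k$. The vanishing assumption then reads
\[\sum_{k\geq 0} c_k \lambda^k = 0 \quad\text{for every } \lambda\in\Lambda.\]
Next, I would consider the function $f:\DD\to\CC$ defined by $f(\lambda)=\sum_{k\geq 0} c_k\lambda^k$. Since $(c_k)$ is bounded, the power series has radius of convergence at least $1$, so $f$ is holomorphic on $\DD$. By assumption $f$ vanishes on $\Lambda$, which has an accumulation point inside $\DD$, so the identity principle forces $f\equiv 0$ on $\DD$. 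Consequently $c_k=0$ for every $k$, that is, $L=0$, and Hahn--Banach yields the desired density.

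I do not anticipate a genuine obstacle: the argument simply combines the duality $(\ell^1)^*=\ell^\infty$, convergence of a power series with bounded coefficients inside the unit disk, and the identity theorem for holomorphic functions. The only subtle point worth checking is that the accumulation point of $\Lambda$ must lie \emph{inside} $\DD$ and not merely on $\overline{\DD}$, which is precisely the hypothesis, since the identity principle has to be applied on the domain of holomorphy $\DD$.
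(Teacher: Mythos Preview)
Your proof is correct and follows essentially the same route as the paper: both use the duality $(\ell^1)^*=\ell^\infty$ together with Hahn--Banach, form the holomorphic function $\lambda\mapsto\sum_k c_k\lambda^k$ on $\DD$, and invoke the identity theorem to conclude that the annihilating sequence is zero.
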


We postpone the proof of these lemmas to give that of Theorem \ref{thm:mainbackward}.

\begin{proof}[Proof of Theorem \ref{thm:mainbackward}]
Since the spectrum of a hypercyclic operator has to intersect the unit circle, if $P(D)$ is hypercyclic, then $P(\DD)\cap\TT\neq\varnothing$. Hence the only difficult implication is $(iii)\implies (ii)$. Thus we start with a nonconstant polynomial satisfying $P(\DD)\cap\TT\neq\varnothing$. Let $\Lambda_1\subset\DD$ with an accumulation point in $\DD$ such that $|P(\lambda)|=1$ and $\lambda P'(\lambda)\neq 0$ for all $\lambda\in\Lambda_1$.
Let also $\Lambda_2\subset\DD$ with an accumulation point in $\DD$ such that $|P(\lambda)|<1$ and $\lambda P'(\lambda)\neq 0$ for all $\lambda\in\Lambda_2$.

Let $U,V,W$ be nonempty open subsets of $\ell^1$ with $0\in W$. Let $m\geq 1$. We may find $p,q\in\NN$, complex numbers $\gamma_1,\dots,\gamma_p\in\Lambda_2$, $\lambda_1,\dots,\lambda_q\in\Lambda_1$, $a_1,\dots,a_p$, $b_1,\dots,b_q$ such that 
$$\sum_{l=1}^p a_l \left(\gamma_l^k\right)\in U\textrm{ and }\sum_{j=1}^q b_j\left(\lambda_j^k\right)\in V.$$
We then set, for $j=1,\dots,q$, $N\geq 0$, 
$$c_j:=c_j(N)=\left(\frac{b_j}{\omega_{m-1,0}(\lambda_j)N^{m-1}P(\lambda_j)^{N-m+1}}\right)^{1/m}$$
(we take any $m$-th root) and 
$$u:=u_N=\sum_{l=1}^p  a_l \left(\gamma_l^k\right)+\sum_{j=1}^q c_j\left(\lambda_j^k\right)$$
so that, if $N$ is large enough, $u$ belongs to $U$. As usual, for $n\in\NN$,
$$u^n=\sum_{d=0}^n \sum_{\substack{\bl\in I_p^{n-d} \\\bj\in I_q^d}}\alpha(\bl,\bj,d,n)a_\bl c_\bj 
\left(\gamma_{l_1}^k\right)\star\cdots\star\left(\gamma_{l_{n-d}}^k\right)\star\left(\lambda_{j_1}^k\right)\star\cdots\star\left(\lambda_{j_d}^k\right).$$
Let us fix for a while $n\leq m$, $d\in\{0,\dots,n\}$, $\bl\in I_p^{n-d}$ and $\bj\in I_q^d$. Applying Lemma \ref{lem:producteigenvectors}, we observe that $\left(\gamma_{l_1}^k\right)\star\cdots\star\left(\lambda_{j_d}^k\right)$ writes as a linear combination of $\left(k^s\mu^k\right)$ for some $\mu\in\left\{\gamma_{l_1},\dots,\lambda_{j_d}\right\}$ and 
$s\leq \card\left\{l;\ \gamma_l=\mu\right\}+\card\left\{j;\ \lambda_j=\mu\right\}-1.$ Moreover, by Lemma \ref{lem:effect}, 
\[ \left\|P(B)^N \left(k^s\mu^k\right)\right\|\leq C N^s |P(\mu)|^N.\]
In particular, if $\mu\in\left\{\gamma_{l_1},\dots,\gamma_{l_{n-d}}\right\}$, so that $|P(\mu)|<1$, then $|c_\bj(N)|\times \left\|P(B)^N \left(k^s\mu^k\right)\right\|$ tends to zero. If $\mu\in\left\{\lambda_{j_1},\dots,\lambda_{j_d}\right\}$, then 
\begin{equation}
\label{eq:backward1}
|c_\bj(N)|\times \left\|P(B)^N \left(k^s\mu^k\right)\right\|\leq C\frac{N^s}{N^{d\times\frac {m-1}m}}.
\end{equation}
Assume first that $n<m$. Then
\[\frac sd\leq\frac{d-1}d\leq\frac{n-1}n<\frac{m-1}m.\]
Hence, the right hand side of \eqref{eq:backward1} goes to zero as $N$ tends to $+\infty$. This implies in particular that $P(B)^N (u^n)$ goes to zero as $N$ tends to $+\infty$. 
Assume now that $n=m$. The same argument shows that $|c_\bj(N)|\times \left\|P(B)^N \left(k^s\mu^k\right)\right\|$ tends to zero, except if $s=d-1=m-1$. Namely, $c_\bj(N)P(B)^N\left(\left(\gamma_{l_1}^k\right)\star\cdots\star\left(\gamma_{l_{n-d}}^k\right)\star\left(\lambda_{j_1}^k\right)\star\cdots\star\left(\lambda_{j_d}^k\right)\right)$ tends to zero except for the terms $c_\bj(N)P(B)^N\left(\left(\lambda_j^k\right)\star\cdots\star\left(\lambda_j^k\right)\right)$. Applying again Lemma \ref{lem:producteigenvectors} and Lemma \ref{lem:effect}, we find that
$$P(B)^N\left(\left(\lambda_j^k\right)\star\cdots\star\left(\lambda_j^k\right)\right)=P(\lambda)^{N-m+1} \omega_{m-1,0}(\lambda_j)N^{m-1}(\lambda_j^k)+o(N^{m-1}).$$
Hence, by the definition of $c_j$, 
$$c_\bj P(B)^N\left(\left(\lambda_j^k\right)\star\cdots\star\left(\lambda_j^k\right)\right)=b_j(\lambda_j^k)+o(1).$$
This achieves the proof that $P(B)^N(u^m)$ belongs to $V$ provided $N$ is large enough.
\end{proof}

\subsection{Proof of Lemma \ref{lem:producteigenvectors}}

The proof of Lemma \ref{lem:producteigenvectors} relies on the following facts and an easy induction.

\noindent{\textbf{Fact 1.}} For any $d\geq 0$, for any $\lambda\in \DD$, there exists a polynomial $Q_d$ with $\deg(Q_d)=d+1$ such that 
\[ \left(k^d\lambda^k\right)\star \left(\lambda^k\right)=\left(Q_d(k)\lambda^k\right). \]

\noindent{\textbf{Fact 2.}} For any $d\geq 0$, for any $\lambda,\mu\in\DD$ with $\lambda\neq\mu$, there exist a polynomial $Q_{d,\lambda,\mu}$ with $\deg(Q_{d,\lambda,\mu})\leq d$
and a complex number $B_{d,\lambda,\mu}$ such that 
 \[ \left(k^d \lambda^k\right)\star \left(\mu^k\right)=\left(Q_{d,\lambda,\mu}(k)\lambda^k\right)+B_{d,\lambda,\mu}\left(\mu^k\right). \]

The proof of Fact 1 is easy. Denoting $\left(k^d\lambda^k\right)\star\left(\lambda^k\right)$ by $(u_k)$, we have
$$u_k=\sum_{j=0}^k j^d\lambda^k$$
so that the result is proved with $Q_d$ the polynomial of degree $d+1$ such that $Q_d(k)=\sum_{j=0}^k j^d$ for all $k\in\NN$.

The proof of Fact 2 can be done by induction on $d$. For $d=0$, we simply write $\left(\lambda^k\right)\star\left(\mu^k\right)$ as $\left(u_k\right)$ with 
\[ u_k=\sum_{j=0}^k \lambda^j\mu^{k-j}=\frac{\lambda}{\lambda-\mu}\left(\lambda^k\right)+\frac{\mu}{\mu-\lambda}\left(\mu^k\right).\]
For the induction step, we write using Fact 1
\[ \left(k^{d+1}\lambda^k\right)=a\left(k^d\lambda^k\right)\star\left(\lambda^k\right)+\left(P(k)\lambda^k\right) \]
with $\deg(P)\leq d$ (to simplify the notations, we do not write the subscripts on the complex numbers 
and on the polynomials involved, but they clearly depend on $d$, $\lambda$, and later on $\mu$). Thus,
\[ \left(k^{d+1}\lambda^k\right)\star\left(\mu^k\right)=a\left(\lambda^k\right)\star\left(\left(k^d \lambda^k\right)\star \left(\mu^k\right)\right)+\left(P(k)\lambda^k\right)\star\left(\mu^k\right). \]
We then apply the induction hypothesis to both $\left(k^d\lambda^k\right)\star \left(\mu^k\right)$ and 
$\left(P(k)\lambda^k\right)\star\left(\mu^k\right)$ to get
\[ \left(k^{d+1}\lambda^k\right)\star\left(\mu^k\right)=a\left(\lambda^k\right)\star \left(\left(Q(k)\lambda^k\right)+b\left(\mu^k\right)\right)+\left(R(k)\lambda^k\right)+c\left(\mu^k\right)\]
with $\deg(Q),\deg(R)\leq d$ and $b,c\in\CC$. We conclude by using again either Fact 1 or the case $d=0$.

\subsection{Proof of Lemma \ref{lem:effect}}

We first isolate the case $N=1$. 
\begin{lemma}\label{lem:effectauxiliary}
Let $P\in\CC[X]$, $d\geq 0$. There exist polynomials $(Q_{d,s})$, $0\leq s\leq d$, such that, for all $\lambda\in\DD$, 
\begin{equation*}
P(B)\left(k^d\lambda^k\right)=\sum_{s=0}^d Q_{d,s}(\lambda)\left( k^s\lambda^k\right)
\end{equation*}
where $Q_{d,d}(\lambda)=P(\lambda)$ and $Q_{d,d-1}(\lambda)=d \lambda P'(\lambda)$.
\end{lemma}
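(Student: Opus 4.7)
The proof proceeds by a direct computation rather than an induction, since the action of $B$ on the vectors $(k^d\lambda^k)$ admits an explicit closed form. Writing $P(X)=\sum_{j\geq 0} p_j X^j$, the plan is to compute $P(B)(k^d\lambda^k)$ term by term and then identify the resulting coefficients as polynomials in $\lambda$ involving derivatives of $P$.

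First I would observe that $B^j(k^d\lambda^k)=\bigl((k+j)^d\lambda^{k+j}\bigr)$ for every $j\geq 0$, since $B$ is the shift on $\ell^1(\NN)$. Expanding $(k+j)^d$ via the binomial theorem gives
\[
B^j\bigl(k^d\lambda^k\bigr)=\lambda^j\sum_{s=0}^{d}\binom{d}{s}j^{d-s}\bigl(k^s\lambda^k\bigr).
\]
Summing against the coefficients $p_j$ of $P$ and interchanging the two sums yields
\[
P(B)\bigl(k^d\lambda^k\bigr)=\sum_{s=0}^{d}\binom{d}{s}\Biggl(\sum_{j\geq 0}p_j\,j^{d-s}\lambda^j\Biggr)\bigl(k^s\lambda^k\bigr).
\]

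Next I would identify the inner sum using the Euler operator $E:=\lambda\,\tfrac{d}{d\lambda}$. Since $E(\lambda^j)=j\lambda^j$, an immediate induction shows $E^{d-s}(\lambda^j)=j^{d-s}\lambda^j$, so that $\sum_{j}p_j j^{d-s}\lambda^j=E^{d-s}P(\lambda)$. Setting
\[
Q_{d,s}(\lambda):=\binom{d}{s}E^{d-s}P(\lambda)
\]
produces the desired representation. Each $Q_{d,s}$ is a polynomial in $\lambda$ because $P$ is, and because $E$ maps $\CC[\lambda]$ into itself.

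Finally I would check the two explicit values. For $s=d$ we have $E^0P=P$, whence $Q_{d,d}(\lambda)=P(\lambda)$. For $s=d-1$ we have $EP(\lambda)=\lambda P'(\lambda)$ and $\binom{d}{d-1}=d$, whence $Q_{d,d-1}(\lambda)=d\lambda P'(\lambda)$. There is no real obstacle here; the only point to take care of is justifying the interchange of sums, which is harmless because $P$ is a polynomial so only finitely many $p_j$ are nonzero.
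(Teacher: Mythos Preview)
Your proof is correct and follows essentially the same route as the paper: compute $B^j(k^d\lambda^k)=\bigl((k+j)^d\lambda^{k+j}\bigr)$, expand $(k+j)^d$, and take the linear combination with the coefficients of $P$. The paper stops after noting that $(k+j)^d=k^d+djk^{d-1}+\cdots$, whereas you carry out the full binomial expansion and package the answer via the Euler operator as $Q_{d,s}=\binom{d}{s}(\lambda\,d/d\lambda)^{d-s}P$; this closed form is a nice addition but not a genuinely different argument.
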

\begin{proof}
We start from 
\[B^n\left(k^d\lambda^k\right)=\left((k+n)^d\lambda^{k+n}\right)=\lambda^n \left(P_{n,d}(k)\lambda^k\right)\]
where $P_{n,d}$ is a monic polynomial of degree $d$. More precisely, $P_{n,d}(k)=k^d+dnk ^{d-1}+\cdots$. The result follows now from a linear combination of these equalities. Observe that if $P(X)=\sum_{j=0}^n\alpha_j X^j$, then $Q_{d,d-1}(\lambda)=\sum_{j=1}^n \alpha_j dj\lambda^j=d\lambda P'(\lambda)$.
\end{proof}
\begin{proof}[Proof of Lemma \ref{lem:effect}]
We first prove by induction on $N$ that the relation \eqref{eq:effect}, which is clearly true for $N=0$, holds for all $N$ and we get an induction formula for the complex numbers $A_{d,N,s}$. Indeed, assuming \eqref{eq:effect} is true for $N$ and using Lemma \ref{lem:effectauxiliary}, we have
\begin{align*}
\left(P(B)\right)^{N+1}(k^d\lambda^k)&=\sum_{r=0}^d P(\lambda)^{N+r-d}A_{d,N,r}\sum_{s=0}^r Q_{r,s}(\lambda) \left(k^s\lambda^k\right)\\
&=\sum_{s=0}^d \sum_{r=s}^d P(\lambda)^{N+r-d}A_{d,N,r}Q_{r,s}(\lambda)\left(k^s\lambda^k\right).
\end{align*}
Thus, \eqref{eq:effect} is true for $N+1$ with the induction formula
\begin{equation}\label{eq:inductionformula}
A_{d,N+1,s}=\sum_{r=s}^d P(\lambda)^{r-s-1}A_{d,N,r}Q_{r,s}(\lambda).
\end{equation}
When $s=d$, using $Q_{d,d}(\lambda)=P(\lambda)$, this formula simply writes $A_{d,N+1,d}=A_{d,N,d}$ so that $A_{d,N,d}=1$ for all $N$. When $s=d-1$, using $Q_{d-1,d-1}(\lambda)=P(\lambda)$ and $Q_{d,d-1}(\lambda)=d\lambda P'(\lambda)$, we have
$$A_{d,N+1,d-1}=d\lambda P'(\lambda)A_{d,N,d}+A_{d,N,d-1}$$
so that $A_{d,N,d-1}=Nd \lambda P'(\lambda)$. Assume now that we have shown that $A_{d,N,r}\sim_{N\to+\infty}\omega_{d,r}N^{d-r}$ for $r=s+1,\dots,d$ and let us prove it for $s$. Rewriting \eqref{eq:inductionformula} we get
$$A_{d,N+1,s}=A_{d,N,s}+(s+1)\lambda P'(\lambda)A_{d,N,s+1}+\sum_{r=s+2}^d P(\lambda)^{r-s-1}A_{d,N,r}Q_{r,s}(\lambda).$$
We sum these equalities and use that $A_{d,N,r}\sim_{N\to+\infty}\omega_{d,r}N^{d-r}$ for $r\geq s+1$ to get
$$A_{d,N,s}=(s+1)\lambda P'(\lambda)\sum_{n=0}^N A_{d,n,s+1}+O\left(\sum_{n=0}^N n^{d-(s+2)}\right).$$
The result follows now easily.
\end{proof}

\subsection{Proof of Lemma \ref{lem:density}}
Let $u\in \ell^\infty(\NN)$ which is orthogonal to all $\left(\lambda^k\right)$ for $\lambda\in\Lambda$
and let $F(\lambda)=\langle u,\left(\lambda^k\right)\rangle$. Then $F$ is a holomorphic function in $\DD$ with an accumulation point of zeros inside $\DD$. Therefore, $F$ and $u$ are zero, which means that $\left\{\left(\lambda^k\right);\lambda\in\Lambda\right\}$ spans a dense subspace in $\ell^1(\NN)$.

\section{Convolution operators, the case $|\phi(0)|>1$}\label{sec:powers}

Example \ref{ex:philarge} relies clearly on the periodicity of the zeros of the sine function. We do not know what happens for other examples of entire functions with $|\phi(0)|>1$, for instance for $\phi(z)=e^z-\lambda$ with $|\lambda-1|>1$. Nevertheless, we have a general result for the existence of powers of hypercyclic vectors.

\begin{theorem}\label{thm:powersgeneral}
Let $X$ be an F-algebra and let $T\in\mathcal L(X)$. Assume that there exist a function $E:\CC\to X$ and
an entire function $\phi:\CC\to \CC$ satisfying the following assumptions:
\begin{enumerate}
\item for all $\lambda\in\CC$,\ $TE(\lambda)=\phi(\lambda)E(\lambda)$;
\item for all $\lambda,\mu\in\CC$, $E(\lambda)E(\mu)=E(\lambda+\mu)$;
\item for all $\Lambda\subset\CC$ with an accumulation point, $\textrm{span}\left(E(\lambda);\ \lambda\in\Lambda\right)$ is dense in $X$;
\item $\phi$ is not a multiple of an exponential function.
\end{enumerate}
Then there exists a residual set of vectors $u\in X$ such that, for all $m\geq 1$, $u^m\in HC(T)$.
\end{theorem}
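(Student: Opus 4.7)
The plan is a Baire category argument. With $(V_k)$ a countable basis of nonempty open sets, set
\[
\mathcal{A}_m := \{u \in X : u^m \in HC(T)\} = \bigcap_k \bigcup_{N \in \NN} \{u \in X : T^N(u^m) \in V_k\}.
\]
Each inner set is open by continuity of $u \mapsto T^N(u^m)$ in the $F$-algebra $X$, so residuality of $\bigcap_m \mathcal{A}_m$ reduces to the density statement: \emph{for every $m \geq 1$ and every pair $(U,V)$ of nonempty open subsets of $X$, there exist $u \in U$ and $N \in \NN$ with $T^N(u^m) \in V$.} This is Lemma \ref{lem:criterion} restricted to the single index $n = m$; we no longer require $T^N(u^n) \in W$ for $n < m$, which should grant extra flexibility compared with Theorem \ref{thm:philarge}.

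I would use the same ansatz as in Theorem \ref{thm:philarge}, namely
\[
u_N := \sum_{l=1}^p a_l\, E(\gamma_l) + \sum_{j=1}^q c_j(N)\, E(\lambda_j/m), \qquad c_j(N) := \bigl(b_j / \phi(\lambda_j)^N\bigr)^{1/m},
\]
with the $\gamma_l$'s clustered around some $a \in \CC$, the $\lambda_j$'s clustered in a short subsegment near some $mb \in \CC$, and the coefficients chosen (by assumption~(3)) so that $\sum_l a_l E(\gamma_l) \in U$ and $\sum_j b_j E(\lambda_j) \in V$. Expanding $u_N^m$ and applying $T^N$, the ``diagonal'' pure-$\lambda$ term delivers the target $\sum_j b_j E(\lambda_j)\in V$, while every other contribution vanishes as $N \to \infty$ provided:
\begin{enumerate}[(i)]
\item $|\phi(ma)| < 1$ (pure-$\gamma$ cross terms);
\item $|\phi(mb)| > 1$ (so that $c_j(N) \to 0$ and $u_N \in U$ for $N$ large);
\item $|\phi((m-d)a + db)| < |\phi(mb)|^{d/m}$ for $d = 1, \dots, m-1$ (mixed cross terms);
\item strict convexity of $\log|\phi|$ on a small segment around $mb$ (non-diagonal pure-$\lambda$ terms, as in Theorem \ref{thm:philarge}).
\end{enumerate}

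The crux is to realize (i)--(iv) simultaneously using only assumption~(4). My approach is to pick $ma$ and $mb$ as the two endpoints of a short segment through a point $z_0$ where $|\phi(z_0)| = 1$. Such a $z_0$ exists since $\phi$ is not a multiple of an exponential: if $|\phi| \geq 1$ everywhere, then $\phi$ has no zeros and Hadamard factorization forces $\phi = c e^{az}$; combined with $|\phi|$ being unbounded (Liouville), the intermediate value theorem along a suitable path yields $z_0$. Generically one may further assume $\phi'(z_0) \neq 0$ and $h''(z_0) \neq 0$, where $\phi = e^h$ locally (perturb $z_0$ along the level set $\{|\phi|=1\}$ using that $\{\phi'=0\}$ and $\{h''=0\}$ are discrete by Lemma \ref{lem:convexity}). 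Taylor expanding,
\[
\log|\phi(z_0 + te^{i\alpha})| = t\, \Re\bigl(h'(z_0)e^{i\alpha}\bigr) + \frac{t^2}{2}\, \Re\bigl(h''(z_0)e^{2i\alpha}\bigr) + O(t^3),
\]
one picks $\alpha$ so that the second-order term is strictly positive (open arc, nonempty since $h''(z_0) \neq 0$) and the first-order term is nonzero (complement of two antipodal points); then on a short segment $z_0 + [t_-, t_+] e^{i\alpha}$ with $t_- < 0 < t_+$, $\log|\phi|$ is strictly convex and strictly monotone, hence straddles $0$. Setting $ma := z_0 + t_- e^{i\alpha}$ and $mb := z_0 + t_+ e^{i\alpha}$ delivers (i), (ii), (iv) at once, and (iii) follows from strict convexity on $[ma, mb]$:
\[
\log|\phi((1-d/m)\,ma + (d/m)\,mb)| < (1-d/m)\log|\phi(ma)| + (d/m)\log|\phi(mb)| < (d/m)\log|\phi(mb)|,
\]
using $\log|\phi(ma)| < 0$ and $1 - d/m > 0$, together with the identity $(1-d/m)\,ma + (d/m)\,mb = (m-d)a + db$.

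The main obstacle is the geometric extraction in the previous paragraph: from the single hypothesis that $\phi$ is not a multiple of an exponential, producing a segment on which $\log|\phi|$ is strictly convex \emph{and} whose endpoints straddle the level $|\phi|=1$. Once such a segment is in hand, the routine verification that every cross term of $u_N^m$ is annihilated by $T^N$ -- the pure-$\gamma$ terms by (i), the mixed terms by (iii), and the non-diagonal pure-$\lambda$ terms by (iv) -- carries over verbatim from the proof of Theorem \ref{thm:philarge}.
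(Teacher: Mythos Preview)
Your argument is correct and takes a genuinely different route from the paper's. The paper fixes any point $a$ with $|\phi(a)|<1$, then uses a maximum--modulus argument on circles centred at $a$ (exactly the proof of Corollary~\ref{cor:smalleigengeneral}, shifted from $0$ to $a$) to locate $w_0$ with $|\phi(w_0)|>1$ while $|\phi|<1$ on the disk $\{|z-a|\le \tfrac{m-1}{m}|w_0-a|\}$; Lemma~\ref{lem:convexity} is then invoked separately near $w_0$ to get the strict convexity needed for the off--diagonal $d=m$ terms. Crucially, the paper's ansatz is $u_N=\sum a_lE(\gamma_l/m)+\sum c_jE(\lambda_j/m)$ (both families divided by $m$), so every mixed argument lands in that disk and the kill condition is simply $|\phi(\cdot)|<1$. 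You instead work on a single short segment through a point of the level set $\{|\phi|=1\}$, chosen so that $\log|\phi|$ is strictly convex and strictly monotone there; this one segment yields (i), (ii), (iv) immediately, and (iii) as a \emph{consequence} of the same convexity together with $\log|\phi(ma)|<0$. Your ansatz keeps $E(\gamma_l)$ undivided, so the mixed kill condition becomes the comparison $|\phi((m-d)a+db)|<|\phi(mb)|^{d/m}$ rather than $|\phi|<1$ --- more delicate to state, but it drops out of convexity for free. The upshot: the paper's approach is more modular (it recycles Corollary~\ref{cor:smalleigengeneral} and Lemma~\ref{lem:convexity} as black boxes) and its mixed--term condition is cruder hence more robust under perturbation; your approach is more economical in that a single Taylor expansion at $z_0$ produces all four ingredients simultaneously, with no separate appeal to Lemma~\ref{lem:convexity}.

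One small wrinkle: your justification that $|\phi|<1$ somewhere invokes Hadamard factorization, which needs a growth hypothesis not present here. The correct (and shorter) argument is Liouville: if $|\phi|\ge1$ everywhere then $1/\phi$ is bounded entire, hence $\phi$ is constant, contradicting (4). Similarly, the perturbation of $z_0$ along $\{|\phi|=1\}$ to avoid the discrete sets $\{\phi'=0\}$ and $\{h''=0\}$ is fine, but the cleanest reason $\{|\phi|=1\}$ is uncountable is that it separates the nonempty open sets $\{|\phi|<1\}$ and $\{|\phi|>1\}$ in the plane.
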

\begin{proof}
Let us set $E=\left\{u\in X;\ \forall m\geq 1,\ u^m\in HC(T)\right\}$. Fixing $(V_j)_{j\geq 1}$ a basis of open subsets of $X$,
we set 
$$\mathcal O_{j,m}=\left\{u\in X;\ \exists N\geq 1,\ T^N (u^m)\in V_j\right\}.$$
Then $E=\bigcap_{j,m\geq 1}\mathcal O_{j,m}$ so that, since each $\mathcal O_{j,m}$ is clearly open, one just has to prove that these sets are dense. Thus, let $U,V$ be nonempty open subsets of $X$
and let $m\geq 1$. We are looking for a vector $u\in U$ and for an integer $N\geq 1$ such that $T^N(u^m)\in V$. 
Let $a\in\CC$ be such that $|\phi(a)|<1$. Arguing as in the proof of Corollary \ref{cor:smalleigengeneral}, we may find $w_0\in \CC$ with $|\phi(w_0)|>1$ and, for any $z\in\CC$ with $|z-a|\leq\frac{m-1}m |w_0-a|$, 
then $|\phi(z)|<1$. Then we fix $\delta>0$ sufficiently small so that $|\phi(z)|<1$ if $|z-a|\leq \frac{m-1}{m}|w_0-a|+\delta$ and $|\phi(w)|>1$ 
if $w\in B(w_0,\delta)$. Let finally, as usual(!), $w_1,w_2\in B(w_0,\delta)$ such that $t\in[0,1]\mapsto \log \left|\phi\left(tw_1+(1-t)w_2\right)\right|$ is strictly convex. 
One may find $p,q\in\NN$, complex numbers $a_1,\dots,a_p,b_1,\dots,b_q$, complex numbers $\gamma_1,\dots,\gamma_p\in B(a,\delta)$ and $\lambda_1,\dots,\lambda_q\in [w_1,w_2]$ such that
$$\sum_{l=1}^p a_l E\left(\gamma_l/m\right)\in U\textrm{ and }\sum_{j=1}^q b_j E\left(\lambda_j\right)\in V.$$
For $N\geq 1$ and $j\in\{1,\dots,q\}$, let $c_j:=c_j(N)$ be any complex number satisfying $c_j^m=b_j/\big(\phi(\lambda_j)\big)^{N}$ and
define
$$u:=u_N=\sum_{l=1}^p a_l E\left(\gamma_l/m\right)+\sum_{j=1}^q c_j E\left(\lambda_j/m\right)$$
so that
$$u^m=\sum_{d=0}^m \sum_{\substack{\bl\in I_p^{m-d} \\\bj\in I_q^d}}\alpha(\bl,\bj,d,m)a_\bl c_\bj E\left(\frac{\gamma_{l_1}+\cdots+\gamma_{l_{m-d}}+\lambda_{j_1}+\cdots+\lambda_{j_d}}m\right).$$
We write $\gamma_l=a+z_l$ with $|z_l|<\delta$ and $\lambda_j=w_0+z'_j$ with $|z'_j|<\delta$. Then
\[ \frac{\gamma_{l_1}+\cdots+\gamma_{l_{m-d}}+\lambda_{j_1}+\cdots+\lambda_{j_d}}m=\frac{(m-d)a+dw_0}{m}+Z \]
with $|Z|<\delta$. Moreover, provided $m<d$, 
\[ \left|\frac{(m-d)a+dw_0}m-a\right|=\frac dm \left|w_0-a\right|\leq\frac{m-1}m \left|w_0-a\right|. \]
Therefore, $\left|\phi\left(\frac{\gamma_{l_1}+\cdots+\gamma_{l_{m-d}}+\lambda_{j_1}+\cdots+\lambda_{j_d}}m\right)\right|<1$ and 
$$c_\bj(N)T^N E\left(\frac{\gamma_{l_1}+\cdots+\gamma_{l_{m-d}}+\lambda_{j_1}+\cdots+\lambda_{j_d}}m\right)\longrightarrow_{N\to+\infty}0.$$
When $d=m$, we conclude exactly as we have done before.
\end{proof}

\begin{proof}[Proof of Theorem \ref{thm:mainpowers}]
 That (i) or (ii) implies (iii) is already contained in \cite{ACPS07}. The proof of (iii) implies (i) is easy if we observe
 that, for any $\lambda\in\CC\backslash\{0\}$, a hypercyclic algebra for $\lambda\phi(D)$ is a supercyclic algebra for $\phi(D)$. 
 Finally, the implication (iii) implies (ii) is a consequence of Theorem \ref{thm:powersgeneral} for $T=\phi(D)$.
\end{proof}

\section{Infinitely generated and dense hypercyclic algebras} \label{sec:infinitelygenerated}

\subsection{Notations.} We use several specific notations for this section. We denote by $\NN$ the set of nonnegative integers
and by $\NNinf$ the set of sequences $(\alpha_1,\alpha_2,\dots)$ with $\alpha_i\in\NN$ for all $i$ and $\alpha_i=0$ for all large $i$. 
For $\alpha\in\NNinf$, $|\alpha|$ stands for the sum $\sum_{i=1}^{+\infty}\alpha_i$. If $A$ is a finite subset of $\NNinf$, $A\neq\varnothing$,
then $L(A)$ denotes $\sup\{|\alpha|;\ \alpha\in A\}$.

For $f\in X^\NN$ and $\alpha\in \NNinf$, with $\alpha_i=0$ for $i>d$, the notation $f^\alpha$ simply means the product $f_1^{\alpha_1}\cdots f_d^{\alpha_d}$. 
If $A$ is a finite subset of $\NNinfe$, we will often consider it as a subset of some $\NN^d$, since we may choose $d\geq 1$ such that $\alpha_i=0$
for all $i\geq d+1$ and all $\alpha\in A$. 

\subsection{A criterion à la Birkhoff for the existence of a dense and infinitely generated hypercyclic algebra}

To prove the existence of a dense and infinitely generated algebra of hypercyclic vectors, we need a reinforcement of Lemma \ref{lem:criterion}. 
This is achieved by the following natural proposition, which simplifies a statement of \cite{BP18} since it does not use the notion of pivot.

\begin{proposition}\label{prop:criterioninfinitely}
 Let $T$ be a continuous operator on the separable $F$-algebra $X$. Let $\prec$ be a total order on $\NN^{(\infty)}$. Assume that, for any $d\geq 1$, 
 for any finite and nonempty subset $A\subset\NNinfe$, for any nonempty open subsets $U_1,\dots,U_d,V$ of $X$, for any neighbourhood $W$ of $0$, 
 there exist $u=(u_1,\dots,u_d)\in U_1\times\cdots\times U_d$ and $N\geq 1$ such that, setting $\beta=\max(\alpha;\ \alpha\in A)$,
 \begin{align*}
  T^N(u^\beta)&\in V\\
  T^N(u^\alpha)&\in W\textrm{ for all }\alpha\in A,\ \alpha\neq\beta.
 \end{align*}
 Then $T$ admits a dense and  not finitely generated hypercyclic algebra.
\end{proposition}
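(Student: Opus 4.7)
My plan is a Baire category argument in the completely metrizable space $X^\NN$ (equipped with the product topology). The aim is to exhibit a dense $G_\delta$ set of sequences $u=(u_n)_{n\ge 1}\in X^\NN$ such that the algebra $\A(u)$ generated without unit by $\{u_n:n\ge 1\}$ is simultaneously dense in $X$, entirely (outside $0$) contained in $HC(T)$, and not finitely generated.

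Fix a countable base $(V_j)_{j\ge 1}$ of nonempty open sets of $X$, and a translation-invariant $F$-norm $\|\cdot\|$ inducing the topology. For every finite nonempty $A\subset\NNinfe$, every $j\ge 1$ and every rational $\veps>0$, writing $\beta=\max_\prec A$, I define
\[
\O(A,j,\veps)=\bigcup_{N\ge 1}\bigl\{u\in X^\NN:\,T^N u^{\beta}\in V_j\text{ and }\|T^N u^{\alpha}\|<\veps\text{ for all }\alpha\in A\setminus\{\beta\}\bigr\},
\]
together with $\O_j=\{u\in X^\NN:\exists n,\ u_n\in V_j\}$. Each of these sets is open. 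The hypothesis of the proposition is exactly what makes $\O(A,j,\veps)$ dense: given a basic open cylinder $U_1\times\cdots\times U_r\times X\times\cdots$ of $X^\NN$, one pads with copies of $X$ up to the largest coordinate index appearing in some $\alpha\in A$, and applies the hypothesis with $V=V_j$ and $W=\{\|\cdot\|<\veps\}$. Density of $\O_j$ is immediate. Let $\G$ denote the countable intersection of all these sets, a dense $G_\delta$.

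I claim any $u\in\G$ does the job. The $\O_j$ conditions make $(u_n)$ dense in $X$, so $\A(u)$ is dense. For hypercyclicity, fix a nonzero $f=\sum_{\alpha\in A}c_\alpha u^{\alpha}\in\A(u)$ with all $c_\alpha\ne 0$ and $\beta=\max_\prec A$, and a base open $V_j$. The key reduction: pick $v\in V_j$ and $\delta>0$ with $B(v,\delta)\subset V_j$; use continuity of the homeomorphism $x\mapsto c_\beta x$ to locate a base index $j'$ with $v/c_\beta\in V_{j'}$ and $c_\beta V_{j'}\subset B(v,\delta/2)$, and continuity of scalar multiplication at $0$ to pick a rational $\veps>0$ so small that $\|c_\alpha z\|<\delta/(2|A|)$ whenever $\|z\|<\veps$, for each $\alpha\ne\beta$. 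Any $N$ witnessing $u\in\O(A,j',\veps)$ then satisfies $\|T^N f-v\|<\delta$, placing $T^N f$ in $V_j$.

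The same hypercyclicity argument forces the monomials $\{u^\alpha:\alpha\in\NNinfe\}$ to be linearly independent over $\CC$ --- any nontrivial vanishing relation would give a nonzero element of $\A(u)$ with orbit $\{0\}$ --- so they form a Hamel basis of $\A(u)$. If $\A(u)$ were generated by some $v_1,\dots,v_r$, each $v_i$ would be a polynomial in $u_1,\dots,u_{N_0}$ for some $N_0$, whence $\A(u)\subset\vect\{u^\alpha:\alpha\in\NN^{N_0}\setminus\{0\}\}$, which by linear independence does not contain $u_{N_0+1}$, a contradiction. The principal obstacle in this scheme is that the family of polynomial expressions in the $u_n$'s is a priori uncountable; the trick that makes the Baire argument countable is to use the total order $\prec$ to isolate a leading monomial $\beta$ for each expression, absorbing its coefficient into the choice of the base open $V_{j'}$ and absorbing the lower-order coefficients into the $\veps$ chosen after the fact.
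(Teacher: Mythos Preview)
Your proof is correct and follows the same Baire-category strategy in $X^{\NN}$ as the paper, but with a different (and arguably cleaner) choice of the countable family of dense open sets. The paper indexes its sets by triples $(A,s,k)$, where $s$ bounds the modulus of the normalized coefficients: it sets $E(A,s)=\{P=\sum_{\alpha\in A}\hat P(\alpha)z^\alpha:\hat P(\beta_A)=1,\ |\hat P(\alpha)|\le s\}$ and $\mathcal A(A,s,k)=\{f:\forall P\in E(A,s),\ \exists N,\ T^N(P(f))\in V_k\}$; the cost is that openness of $\mathcal A(A,s,k)$ requires a compactness argument on $E(A,s)$. You instead index by $(A,j,\veps)$ and constrain only the individual monomials $T^N u^\alpha$, so openness is immediate; the coefficients of a given polynomial are absorbed \emph{a posteriori} by choosing $j'$ (for the leading coefficient $c_\beta$) and $\veps$ (for the rest). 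Your handling of density of $\A(u)$ via the sets $\O_j$ is also more self-contained than the paper's citation of \cite{BP18}. The not-finitely-generated arguments coincide; your linear-independence formulation is exactly what the remark following the paper's proof points to. One wording quibble: a nontrivial vanishing relation does not produce ``a nonzero element with orbit $\{0\}$'' --- it produces the zero element --- but your hypercyclicity step only uses $c_\beta\neq 0$, so it indeed proves that the formal expression has dense orbit, hence cannot evaluate to $0$; the logic stands, only the sentence is inverted.
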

\begin{proof}
Let $(V_k)$ be a basis of open neighbourhoods of $X$. For $A\subset\NNinfe$, $A\neq\varnothing$, $A$ finite, for $s,k\geq 1$, define
\begin{align*}
 E(A,s)&=\left\{P(z)=\sum_{\alpha\in A}\hat P(\alpha)z^\alpha;\ \hat P(\beta_A)=1\textrm{ and }|\hat P(\alpha)|\leq s\right\}\\
 \mathcal A(A,s,k)&=\left\{f\in X^\NN;\ \forall P\in E(A,s),\ \exists N\geq 1,\ T^N(P(f))\in V_k\right\}
\end{align*}
where $\beta_A=\max(\alpha;\ \alpha\in A)$. The set $A$ being fixed, $A$ may be considered as a subset of $\NN^d$ and $E(A,s)$ as a subset of $\CC[X_1,\dots,X_d]$. 
Moreover this set $E(A,s)$ is compact. By continuity of the maps $(f,P)\mapsto T^N(P(f))$, this implies that each set $\mathcal A(A,s,k)$ is open. Moreover, 
the assumptions of the theorem clearly imply that each such set is dense. Hence, $\mathcal G:=\bigcap_{A,s,k} \mathcal A(A,s,k)$ is a residual subset of $X^\NN$. 

Observe also that the set of $f$ in $X^\NN$ that induce a dense algebra in $X$ is residual in $X^\NN$ (see \cite{BP18}). Hence we may pick $f\in X^\NN$ belonging
to $\bigcap_{A,s,k}\mathcal A(A,s,k)$ and inducing a dense algebra in $X$. 

We show that for all nonzero polynomials $P$, $P(f)$ belongs to $HC(T)$. Let $A$ be the spectrum of $P$, let $\beta=\max(\alpha;\ \alpha\in A)$, let 
$Q=\frac{1}{\hat P(\beta)}P$ and let $s\geq 1$ be such that $Q\in E(A,s)$. Since $f$ belongs to $\bigcap_k \mathcal A(A,s,k)$, we conclude that $Q(f)$, hence
$P(f)$, is a hypercyclic vector for $T$.

It remains to show that the algebra generated by $f$ is not finitely generated. 
Assume on the contrary that it is generated by a finite number of $f^{\alpha(1)},\dots,f^{\alpha(p)}$. In particular, it is generated by a finite number
of $f_1,\dots,f_q$. Then there exists a polynomial $Q\in \CC[z_1,\dots,z_q]$ such that $f_{q+1}=Q(f_1,\dots,f_q)$. Define $P(z)=z_{q+1}-Q(z)$. Then 
$P$ is a nonzero polynomial. Nevertheless, $P(f)=0$, which contradicts the fact that $P(f)$ is a hypercyclic vector for $T$.
\end{proof}

\begin{remark}
 The last part of the proof may be formulated in the following way: let $f\in X^\NN$ be such that $P(f)$ is never zero for any nonzero polynomial $P$. 
 Then the algebra generated by $f$ is not finitely generated. We could avoid this by using the following lemma, proved in \cite{BP18}: the set of
 sequences $f\in X^\NN$ whose induced algebra is not finitely generated is residual in $X$. Nevertheless, the proof of this last statement seems more complicated.
\end{remark}

\subsection{On the existence of infinitely generated hypercyclic algebras}

We now show how to adapt our proofs to the existence of a dense and infinitely generated algebra. We have to pay the price of additional technical difficulties
and we restrict ourselves to an analogue of Corollary  \ref{cor:smalleigengeneralbis}.

\begin{theorem}\label{thm:smalleigeninfinitely}
Let $X$ be an F-algebra and let $T\in\mathcal L(X)$. Assume that there exist a function $E:\CC\to X$ and
an entire function $\phi:\CC\to \CC$ satisfying the following assumptions:
\begin{enumerate}
\item for all $\lambda\in\CC$,\ $TE(\lambda)=\phi(\lambda)E(\lambda)$;
\item for all $\lambda,\mu\in\CC$, $E(\lambda)E(\mu)=E(\lambda+\mu)$;
\item for all $\Lambda\subset\CC$ with an accumulation point, $\textrm{span}\left(E(\lambda);\ \lambda\in\Lambda\right)$ is dense in $X$;
\item $\phi$ is not a multiple of an exponential function;
\item for all $\rho\in (0,1)$, there exists $w_0\in\CC$ with $\left|\phi\left(w_0\right)\right|>1$ and, for all $r\in (0,\rho]$, $\left|\phi\left(rw_0\right)\right|<1$.
\end{enumerate}
Then $T$ supports a hypercyclic algebra which is dense and is not finitely generated.
\end{theorem}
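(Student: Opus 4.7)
The strategy is to verify the hypotheses of Proposition \ref{prop:criterioninfinitely} by extending the single-generator argument behind Corollary \ref{cor:smalleigengeneralbis} to several generators simultaneously. I would endow $\NNinfe$ with the graded-lexicographic order $\prec$ (first compare $|\alpha|$, then break ties lexicographically), so that for any finite $A\subset\NNinfe$ the maximum $\beta=\max_\prec A$ satisfies $|\beta|\geq|\alpha|$ for every $\alpha\in A$, with equality only when $\alpha$ precedes $\beta$ lexicographically.

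Fix $A$, set $m=|\beta|$, and choose $d$ large enough that every $\alpha\in A$ is supported in the first $d$ coordinates. Given $U_1,\dots,U_d,V,W$, I would replay the geometric setup from the proof of Corollary \ref{cor:smalleigengeneralbis} for this value of $m$: pick $\veps>0$ small, set $\rho=(m-1)/m+m\veps$, find $w_0\in\CC$ with $|\phi(w_0)|>1$ and $|\phi(rw_0)|<1$ for every $r\in(0,\rho]$, put $b=w_0/m$ and $a=\veps w_0/m$, then take $\delta>0$ small and $w_1,w_2\in B(w_0,\delta)$ so that $t\mapsto\log|\phi(tw_1+(1-t)w_2)|$ is strictly convex and $|\phi|>1$ on $[w_1,w_2]$. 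The $d$-tuple of candidate generators would take the form $u_i=v_i+w_i$ with $v_i=\sum_l a_{i,l}E(\gamma_{i,l})\in U_i$ supported on $\gamma_{i,l}\in B(a,\delta)$, and $w_i=\sum_j c_{i,j}(N)E(\lambda_{i,j}/m)$ with $\lambda_{i,j}\in[w_1,w_2]$ taken from a set with accumulation point, so that combinations of the form $E(\sum_i\beta_i\lambda_{i,j_i}/m)$ span a dense enough subspace to approximate a prescribed target in $V$.

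Expanding $u^\alpha=(v+w)^\alpha$ via the multi-binomial formula, each monomial is a scalar multiple of some $E\big(\sum_{i,s}\gamma_{i,l_{i,s}}+\sum_{i,s'}\lambda_{i,j_{i,s'}}/m\big)$, whose argument lies within $O(\delta)$ of $(n-k)a+(k/m)w_0=\big((n-k)\veps+k\big)w_0/m$, where $n=|\alpha|$ and $k$ is the total number of $\lambda$-picks. For $k\leq m-1$ this argument lies on the segment where $|\phi|<1$, and since $|c_{i,j}(N)|\to 0$ the corresponding contribution vanishes after applying $T^N$. The only case that survives is $k=m=|\alpha|$, which forces $\alpha_i=k_i$ for every $i$, so the analysis reduces to $T^N(w^\alpha)$ for $\alpha\in A$ with $|\alpha|=m$.

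For these surviving terms, the non-diagonal contributions to $w^\alpha$ (those where, within some factor $w_i^{\alpha_i}$, the indices $j_{i,s}$ are not all equal) are dispatched by the strict convexity of $\log|\phi|$ on $[w_1,w_2]$, exactly as in the proof of Theorem \ref{thm:philarge}. The diagonal contributions yield, after applying $T^N$, linear combinations of $E(\sum_i\alpha_i\lambda_{i,j_i}/m)$ whose coefficients are products of $\phi$-values raised to the $N$ and monomials in the $c_{i,j}(N)$'s. I would then choose the $c_{i,j}(N)$ by a staircase construction along the lex order, placing the pivot on the lex-largest coordinate of $\beta$, so that the resulting system becomes triangular: the diagonal part of $T^N(w^\beta)$ hits the prescribed target in $V$, while for every $\alpha\prec\beta$ with $|\alpha|=m$ the leading coefficients are forced to vanish. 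The main difficulty lies precisely in this staircase bookkeeping, which is where the simplification offered by Proposition \ref{prop:criterioninfinitely} (no explicit pivot hypothesis) becomes essential; once this triangular solvability is in place, Proposition \ref{prop:criterioninfinitely} delivers the dense, not finitely generated hypercyclic algebra.
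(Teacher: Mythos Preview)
Your overall framework---graded-lexicographic order, splitting $u_i=v_i+w_i$, and reducing to the case $k=m=|\alpha|$---is reasonable, and the analysis for $k<m$ goes through essentially as you describe. The genuine gap is the step you yourself flag as the main difficulty: the ``staircase construction'' for the surviving terms $T^N(w^\alpha)$ when $|\alpha|=m$ and $\alpha\neq\beta$. You never say what this construction is, and it cannot be dispatched as casually as the single-generator case suggests.

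Here is the issue. With every $u_i$ carrying a piece $w_i=\sum_j c_{i,j}(N)E(\lambda_{i,j}/m)$, the fully-$\lambda$ part of $u^\alpha$ for $|\alpha|=m$ has all its arguments in $[w_1,w_2]$, where $|\phi|>1$. So $T^N$ does not help; the only way to kill those terms is through the sizes of the $c_{i,j}(N)$. But the same coefficients must make the diagonal part of $T^N(w^\beta)$ land on a prescribed target in $V$. In Theorem~\ref{thm:philarge} this tension is absent because there is a single $\alpha$ of top degree. Here, for every $\alpha$ with $|\alpha|=m$ you get products $\prod_i c_{i,j_i}^{\alpha_i}\,\phi(\cdot)^N$, and you must arrange that they stay bounded for $\alpha=\beta$ and decay for every other $\alpha$. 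Nor does the convexity argument transplant ``exactly as in the proof of Theorem~\ref{thm:philarge}'': that argument relies on the specific scaling $|c_j|\asymp|\phi(\lambda_j)|^{-N/m}$, while your multi-index $c_{i,j}$'s would need additional $i$-dependent growth/decay factors to separate $\beta$ from the other $\alpha$'s, and those extra factors can destroy the convexity estimate (and also interfere with the $k<m$ analysis) unless chosen with care you do not supply. The phrase ``placing the pivot on the lex-largest coordinate of $\beta$ so that the resulting system becomes triangular'' does not describe any concrete mechanism.

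The paper avoids this balancing problem with an \emph{asymmetric} construction quite different from yours. It uses the plain lexicographic order (so $\beta$ need not have maximal $|\beta|$), places the $N$-dependent eigenvectors only in $u_1$ (at points $z_j/\beta_1$, not $\lambda_j/m$), and equips each $u_i$ with $i\geq 2$ and $\beta_i>0$ with a \emph{fixed} small correction $\omega E(\rho_i\kappa/\beta_i)$. The key device is a splitting $w_0=z_0+\kappa$ together with weights $\rho_i$ chosen so that $\sum_i\rho_i=1$ but $\sum_i\rho_i\alpha_i/\beta_i<1$ for the competing $\alpha$; this forces every $\alpha\neq\beta$ to produce arguments where $|\phi|<1$, so $T^N$ itself kills those terms and no delicate coefficient tuning is required.
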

\begin{proof}
We intend to apply Proposition \ref{prop:criterioninfinitely}. Thus,
 let $d\geq 1$, let $A$ be a finite and nonempty subset of $\NNinfe$. Enlarging $d$ if necessary, we may and shall assume that $A\subset\NN^d$. 
 We choose for total order on $\NN^d$ the lexicographical order and we denote by $\beta$ the maximal element of $A$ for this order. Without loss
 of generality, we assume that $\beta_1\neq 0$. Let also $U_1,\dots,U_d,V$ be nonempty open subsets of $X$ and let $W$ be an open neighbourhood of $0$.
 We set
\begin{align*}
 I_\beta&=\left\{i\in\{2,\dots,d\};\ \beta_i\neq 0\right\}\\
 \Omega_A&=\left\{\alpha\in A;\ \alpha_1=\beta_1\right\}\backslash\{\beta\}\\
 &\quad\quad\cup\left\{\alpha\in \NN^d;\ \forall i\in I_\beta,\ \alpha_i\leq\beta_i\textrm{ and }\exists i\in I_\beta,\ \alpha_i<\beta_i\right\}.
\end{align*}
We first consider the case $I_\beta\neq \varnothing$ (the other case, which is easier, will be discussed at the end of the proof). 
Observe that, for any $\alpha\in \Omega_A$, there exists $i_0\in I_\beta$ such that, for all $i\leq i_0$, $\alpha_i=\beta_i$ and 
$\alpha_{i_0}<\beta_{i_0}$. 
Therefore it is easy to construct a sequence $(\rho_i)_{i\in I_\beta}\subset (0,1)$ satisfying
$$\sum_{i\in I_\beta}\rho_i=1$$
$$\forall\alpha\in \Omega_A,\ \sum_{i\in I_\beta}\rho_i\times\frac{\alpha_i}{\beta_i}<1.$$
 Let $\eta>0$ be such that
$$\forall \alpha\in\Omega_A,\ \sum_{i\in I_\beta}\rho_i\times\frac{\alpha_i}{\beta_i}\leq 1-\eta.$$
We finally choose $\veps>0$ and $\rho\in (0,1)$ satisfying
$$\left\{
\begin{array}{rcl}
 \rho&>&\displaystyle(1-\veps)\frac{\beta_1-1}{\beta_1}+L(A)\veps\\
 \rho&>&(1-\veps)+(1-\eta)\veps=1-\eta\veps.
\end{array}
\right.$$
This is possible for instance by setting $\rho=1-\eta\veps/2$ for a sufficiently small $\veps>0$. For this value of $\rho$, we get $w_0\in\CC$
with $|\phi(w_0)|>1$ and $|\phi(rw_0)|<1$ if $r\in (0,\rho]$. We set $\kappa=\veps w_0$ and $z_0=(1-\veps)w_0=w_0-\kappa$ and we summarize some properties of $w_0$, $z_0$ and $\kappa$ below:
\begin{equation}\label{eq:infinitely1}
 |\phi(w_0)|>1
\end{equation}
\begin{equation}\label{eq:infinitely2}
 |\phi(z_0+r\kappa)|<1\textrm{ if }r\in [0,1-\eta]
\end{equation}
\begin{equation}\label{eq:infinitely3}
 |\phi(tz_0+s\kappa)|<1\textrm{ if }t\leq\frac{\beta_1-1}{\beta_1}\textrm{ and }s\leq L(A).
\end{equation}
By continuity, there exists $\delta>0$ such that these properties remain valid respectively in $B(w_0,\delta)$, $B(z_0+r\kappa,\delta)$, $B(tz_0+s\kappa,\delta)$.
Let (as usual!) $w_1\neq w_2$ in $B(w_0,\delta/2)$ such that $t\mapsto \log|\phi(tw_1+(1-t)w_2)|$ is stricly convex on $[0,1]$. 
We then choose integers $p,q$, complex numbers $a_{1,i},\dots,a_{p_i,i}$, $b_1,\dots,b_q$, complex numbers $\gamma_{1,i},\dots,\gamma_{p_i,i}\in B(0,\delta/2L(A))\cap (0,\rho w_0)$,
complex numbers $\lambda_1,\dots,\lambda_q$ in $[w_1,w_2]$ such that
$$\forall i\in\{1,\dots,d\},\ \sum_{l=1}^{p_i}a_{l,i}E(\gamma_{l,i})\in U_i\textrm{ and }\sum_{j=1}^q b_j E(\lambda_j)\in V.$$
We define for $j=1,\dots, q$, $z_j=\lambda_j-\kappa$. We then set
$$u_1(N)=\sum_{l=1}^{p_1} a_{l,1}E(\gamma_{l,1})+\sum_{j=1}^q c_j(N)E(z_j/\beta_1),$$
$$u_i=\sum_{l=1}^{p_i}a_{l,i}E(\gamma_{l,i})+\omega E(\rho_i \kappa/\beta_i),$$
provided $i\in I_\beta$,
$$u_i=\sum_{l=1}^{p_i}a_{l,i}E(\gamma_{l,i})$$
otherwise, namely if $i\notin I_\beta$ and $i>1$. Above, $\omega$ is any positive real number small enough so that all $u_i$ belong to $U_i$ for $i\geq 2$, and $c_j(N)$ is any complex number such that
$$c_j(N)^{\beta_1}=\frac{b_j}{\big(\phi(\lambda_j)\big)^N\omega^{\sum_{i\in I_\beta}\beta_i}}.$$
Since $c_j(N)$ goes to zero, $u_1(N)$ belongs to $U_1$ for $N$ large enough. It remains to show that $T^N(u^\beta)\in V$ and $T^N(u^\alpha)\in W$
for $\alpha\in A\backslash\{\beta\}$ and $N$ large enough. Let us first compute $u^\beta$. Let us examine $u_1^{\beta_1}$. As in the proof of 
Theorem \ref{thm:philarge}, we can distinguish three different kinds of terms:
\begin{itemize}
 \item the terms $c_j(N)^{\beta_1}E(z_j)$, for $j=1,\dots,q$;
 \item the terms $c_\bj(N)E\left(\frac{z_{j_1}+\cdots+z_{j_{\beta_1}}}{\beta_1}\right)$ for $\bj\in I_q^{\beta_1}$ nondiagonal;
 \item a finite number of terms $a(\bj,\gamma,N)E\left(\frac{z_{j_1}+\cdots+z_{j_l}}{\beta_1}+\gamma\right)$ with $\bj\in I_q^l$, $l<\beta_1$
 and $|\gamma|<\beta_1 \delta/2L(A)$.
\end{itemize}
Observe that the terms $a(\bj,\gamma,N)$ may depend on $N$ by involving $c_{\bj}(N)$ (this happens if $\bj\in I_q^l$ for $l>0$) but that they are uniformly bounded in $N$.

Let us now inspect $u_2^{\beta_2}\cdots u_d^{\beta_d}$. For this product, we distinguish two kinds of terms:
\begin{itemize}
 \item the term $\omega^{\sum_{i\in I_\beta}\beta_i}E(\kappa)$;
 \item a finite number of terms $b(r,\gamma')E(r\kappa+\gamma')$ with $0\leq r\leq 1-\eta$ and $|\gamma'|< (\beta_2+\cdots+\beta_d)\delta/2L(A)\leq\delta/2$.
 Here, $r$ is equal to $\sum_{i\in I_\beta}\alpha_i\rho_i/\beta_i$ for some $\alpha\in\Omega_A$, which explains why $r\leq 1-\eta$. 
\end{itemize}
Thus, taking the product $u^{\beta}$, we get six kinds of terms:
\begin{itemize}
\item the terms $c_j(N)^{\beta_1}\omega^{\sum_{i\in I_\beta}\beta_i}E(\lambda_j)$ for $j=1,\dots,q$. But the choice of $c_j(N)$ ensures that 
$$T^N\left(\sum_{j=1}^q c_j(N)^{\beta_1}\omega^{\sum_{i\in I_\beta}\beta_i}E(\lambda_j)\right)=\sum_{j=1}^q b_jE(\lambda_j).$$
\item the terms $c_j(N)^{\beta_1}b(r,\gamma')E(z_j+r\kappa+\gamma')$, with $1\leq j\leq q$, $0\leq r\leq 1-\eta$ and $|\gamma'|<\delta/2$. Since $|z_j+r\kappa+\gamma'-(z_0+r\kappa)|<\delta$, we deduce from \eqref{eq:infinitely2} that
$$T^N\left(c_j(N)^{\beta_1}b(r,\gamma')E(z_j+r\kappa+\gamma')\right)\rightarrow_{N\to+\infty}0.$$
\item the terms $c_\bj(N)\omega^{\sum_{i\in I_\beta}\beta_i}E\left(\frac{\lambda_{j_1}+\cdots+{\lambda_{j_{\beta_1}}}}{\beta_1}\right)$ for $\bj\in I_q^{\beta_1}$ nondiagonal. But as in the proof of Theorem \ref{thm:philarge}, the choice of $[w_1,w_2]$ ensures that for all $\bj\in I_q^{\beta_1}$ nondiagonal, 
$$T^N\left(c_\bj(N)\omega^{\sum_{i\in I_\beta}\beta_i}E\left(\frac{\lambda_{j_1}+\cdots+{\lambda_{j_{\beta_1}}}}{\beta_1}\right)\right)\rightarrow_{N\to+\infty}0.$$
\item the terms $c_\bj(N)b(r,\gamma')E\left(\frac{z_{j_1}+\cdots+{z_{j_{\beta_1}}}}{\beta_1}
+r\kappa+\gamma'\right)$. As before, \eqref{eq:infinitely2} ensures that 
$$T^N\left(c_\bj(N)b(r,\gamma')E\left(\frac{z_{j_1}+\cdots+{z_{j_{\beta_1}}}}{\beta_1}
+r\kappa+\gamma'\right)\right)\rightarrow_{N\to+\infty}0.$$
\item the terms $a(\bj,\gamma,N)\omega^{\sum_{i\in I_\beta}\beta_i}E\left(\frac{z_{j_1}+\cdots+z_{j_l}}{\beta_1}+\kappa+\gamma\right)$ with $l<\beta_1$, $|\gamma|<\beta_1 \delta/2L(A)$. But writing 
$$\frac{z_{j_1}+\cdots+z_{j_l}}{\beta_1}+\kappa=\frac{l}{\beta_1}z_0+\kappa+\gamma'$$
with $|\gamma'|<\delta$, we deduce from \eqref{eq:infinitely3} that 
$$T^N\left(a(\bj,\gamma,N)\omega^{\sum_{i\in I_\beta}\beta_i}E\left(\frac{z_{j_1}+\cdots+z_{j_l}}{\beta_1}+\kappa+\gamma\right)\right)\rightarrow_{N\to+\infty}0.$$
\item the terms $a(\bj,\gamma,N)b(r,\gamma')E\left(\frac{z_{j_1}+\cdots+z_{j_l}}{\beta_1}+\gamma+r\kappa+\gamma'\right)$ with $l<\beta_1$, $|\gamma|+|\gamma'|<\delta/2$ and $r\in [0,1-\eta]$. A similar proof shows that 
$$T^N\left(a(\bj,\gamma,N)b(r,\gamma')E\left(\frac{z_{j_1}+\cdots+z_{j_l}}{\beta_1}+\gamma+r\kappa+\gamma'\right)\right)\rightarrow_{N\to+\infty}0.$$
\end{itemize}
Hence, as expected, provided $N$ is large enough, $T^N(u(N)^\beta)\in V$. Let us now consider $\alpha\in A$ with $\alpha\prec \beta$ and let us show that
$T^N(u(N)^\alpha)$ goes to zero as $N$ goes to $+\infty$. The analysis is similar but simpler. Either $\alpha_1=\beta_1$ and in that case in $u_2^{\alpha_2}\cdots u_d^{\alpha_d}$ 
appear only terms like $b(r,\gamma)E(r\kappa+\gamma)$ with $0\leq r\leq 1-\eta$ and $|\gamma|<(\alpha_2+\cdots+\alpha_d)\delta/2L(A)$. 
Here, $r=\sum_{i\in I_\beta}\alpha'_i\rho_i/\beta_i$ for some $\alpha'_i\leq \alpha_i$ so that $r\leq 1-\eta$ since $\alpha\in \Omega_A$. 
Then applying $T^N$ to each term of $u^\alpha$ will lead to a sequence going to zero. 

Or $\alpha_1<\beta_1$, and now in $u_1^{\alpha_1}$ appear only terms like 
$$a(\bj,\bl,N)E\left(\frac{z_{j_1}+\cdots+z_{j_t}}{\beta_1}+\gamma_{l_1,1}+\cdots+\gamma_{l_{\alpha_1-t},1}\right)$$
with $t\leq \alpha_1$, namely terms like $a(\bj,\gamma,N)E\left(\frac{t}{\beta_1}z_0+\gamma\right)$ with $|\gamma|<\delta/2$.
Expanding the product $u_2^{\alpha_2}\cdots u_d^{\alpha_d}$ leads to terms like $b(r,\gamma')E(r\kappa+\gamma')$ with $|\gamma'|<\delta/2$ and 
$$r\leq \sum_{i\in I_\beta}\rho_i\frac{\alpha_i}{\beta_i}\leq L(A).$$
A last application of \eqref{eq:infinitely3} shows that 
$$T^N\left(a(\bl,\gamma,N)b(r,\gamma')E\left(\frac t{\beta_1}z_0+r\kappa+\gamma+\gamma'\right)\right)\rightarrow_{N\to+\infty}0.$$
We need finally to consider the case $I_\beta=\varnothing$. In that case, the proof of Theorem \ref{thm:philarge} works almost mutatis mutandis. Indeed, the situation is simplified
because now $u^\beta=u_1^{\beta_1}$. We then set
\begin{align*}
u_1(N)&=\sum_{l=1}^{p_1}a_{l,1}E(\gamma_{l,1})+\sum_{j=1}^q c_j(N)E\left(\frac{\lambda_j}{\beta_1}\right)\\
u_i&=\sum_{l=1}^{p_i}a_{l,i}E(\gamma_{l,i}),\ i=2,\dots,d
\end{align*}
with $c_j(N)^{\beta_1}=b_j/\left(\phi(\lambda_j)\right)^N$ and we follow a completely similar proof. Observe in particular that if $\alpha\in A$ satisfies $\alpha\prec\beta$, then $\alpha_1<\beta_1$.
\end{proof}

%
%

\bibliographystyle{plain} 
\bibliography{biblio,bibliomoi,preprint} 

\end{document}